\newcommand{\C}{\kappa}
\newcommand{\hb}{\hat b}
\newcommand{\mN}{\mathbb{N}}
\newcommand{\mZ}{\mathbb{Z}}
\newcommand{\mR}{\mathbb{R}}
\newcommand{\eps}{\varepsilon}
\newcommand{\dsp}{\displaystyle}
\newcommand{\bb}{\gamma}
\numberwithin{equation}{section}
\begin{document}

\title{\Large $\Gamma$-convergence of non-local, non-convex functionals in one dimension}


\author{HA\"IM  BREZIS} 
\address{Department of Mathematics, 
Rutgers University, Hill Center, Busch Campus, 
	110 Frelinghuysen Road, Piscataway, NJ 08854, USA,\\
	 Departments of Mathematics and Computer Science, Technion, Israel Institute of Technology, 32.000 Haifa, Israel, \\
Laboratoire Jacques-Louis Lions, 
Sorbonne Universit\'es, UPMC Universit\'e Paris-6, 4  place Jussieu, 
75005 Paris, France,\\
brezis@math.rutgers.edu}

\author{HOAI-MINH NGUYEN}

\address{Department of Mathematics, EPFL SB CAMA, 
	Station 8 CH-1015 Lausanne, Switzerland, 
	hoai-minh.nguyen@epfl.ch}

\maketitle

\begin{history}
\received{(Day Month Year)}
\revised{(Day Month Year)}
\end{history}

\begin{abstract} We study the  $\Gamma$-convergence of a family of non-local, non-convex  functionals in $L^p(I)$ for $p \ge 1$, where $I$ is an open interval.  We show that the limit is a multiple of  the $W^{1, p}(I)$ semi-norm to the power $p$ when $p>1$ (resp. the $BV(I)$ semi-norm when $p=1$). In dimension one, this extends earlier results which required a monotonicity condition. 

\end{abstract}

\medskip 
\noindent{\bf Key words}: non-local, non-convex, pointwise convergence, $\Gamma$-convergence, Sobolev norms. 






\section{Introduction and statement of the main results}

Assume that  $\varphi:[0, +\infty) \to [0, + \infty)$ is defined at {\it every} point of $[0, + \infty)$, $\varphi$ is
 continuous on $[0, +\infty)$ except at a finite number of points in $(0, +\infty)$ where it admits a limit from the left and from the right, and $\varphi(0) = 0$. Let $I$ denote an  open interval of $\mR$.  Fix $p \ge 1$. Given a measurable function $u$ on $I$, and a  parameter $\delta > 0$, we define, as in \cite{BrNg-Nonlocal1},  the following non-local functionals
\begin{equation}\label{def-Lambda}
\Lambda (u, I): = \int_I \int_I \frac{\varphi(|u(x) - u(y)|) }{|x - y|^{p + 1}} \, dx \,
dy  \le + \infty 
\end{equation}
and 
$$
 \Lambda_{\delta}(u, I): = \delta^p \Lambda (u/\delta, I).
$$
Throughout the paper,  we make the following three assumptions on $\varphi$:
\begin{equation}\label{cond-varphi-0}
\varphi(t) \le \alpha t^{p+1} \mbox{ on } [0,1] \mbox{ for some positive constant } \alpha,
\end{equation}
\begin{equation}\label{cond-varphi-1}
\varphi(t) \le \beta  \mbox{ on } [0, + \infty) \mbox{ for some positive constant } \beta,
\end{equation}
and
\begin{equation}\label{cond-varphi-3}
\int_0^\infty \varphi(t) t^{-(p+1)} \,dt=1/2. 
\end{equation}

%
%
%


\medskip 

 Our main result is the following  

\begin{theorem}\label{thm-gamma} Let $p \ge 1$ and let $\varphi$ satisfy \eqref{cond-varphi-0}-\eqref{cond-varphi-3}. Then, as $\delta \to 0$,
\begin{equation*}
\Lambda_{\delta}(\cdot,  I) \; \Gamma\mbox{-converges in $L^p(I)$ to }  \Lambda_0(\cdot, I), 
\end{equation*}
where 
\begin{equation}\label{def-Lambda0}
 \Lambda_0(u, I) = \C \int_{I} |u' |^p \, dx \mbox{ in } L^p(I), 
\end{equation}
for some constant $\C$, depending on $\varphi$ but independent of $I$, such that 
\begin{equation}\label{constant-gamma}
0 \le   \C \le 1.  
\end{equation}
\end{theorem}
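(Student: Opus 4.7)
My plan is to split the proof into the two standard $\Gamma$-convergence inequalities (recovery sequence and liminf) and to identify the constant $\C$ via a scale-invariant cell formula. A natural candidate is the normalized infimum
\begin{equation*}
\C = \lim_{L \to \infty} \frac{1}{L}\,\inf\bigl\{\,\Lambda(w, (0,L)) : w : (0,L) \to \mR,\ w(0^+)=0,\ w(L^-)=L\,\bigr\},
\end{equation*}
or a close variant taking the non-local boundary effect into account. The trivial competitor $w(x)=x$ together with \eqref{cond-varphi-3} forces $\C \le 1$, and positivity of $\varphi$ gives $\C \ge 0$, so that \eqref{constant-gamma} holds.

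\textbf{Recovery sequence (upper bound).} I would first prove the inequality for $u$ piecewise affine with finitely many pieces, then pass to general $u \in W^{1,p}(I)$ by density and a diagonal argument. On each affine piece of $u$ with slope $a$, one glues in a copy of a near-optimal profile from the cell problem, rescaled by the factor $\delta$ (so the optimizing oscillations live at scale $\delta$) and affinely conjugated to match the slope $a$. The scaling relation $\Lambda_\delta(u, I) = \delta^p \Lambda(u/\delta, I)$ then yields a contribution close to $\C |a|^p \ell$ from each piece of length $\ell$; cross terms between distinct pieces and boundary errors tend to zero as $\delta \to 0$. The function $u_\delta$ converges to $u$ in $L^p(I)$ because the added oscillations have amplitude $O(\delta)$. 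For $u \in L^p(I) \setminus W^{1,p}(I)$ the inequality is vacuous.

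\textbf{Liminf and compactness (lower bound).} If $u_\delta \to u$ in $L^p(I)$ with $\liminf_\delta \Lambda_\delta(u_\delta, I) < \infty$, I first argue that $u \in W^{1,p}(I)$: hypotheses \eqref{cond-varphi-0}--\eqref{cond-varphi-1} allow one to control, up to multiplicative constants, the non-local quantity used in \cite{BrNg-Nonlocal1}, from which $W^{1,p}$-regularity of the limit follows. For the lower bound itself, I would localize: partition $I$ into subintervals $\{I_j\}$ of length $\ell$, drop the non-negative cross terms between distinct $I_j$'s, and on each $I_j$ perform the rescaling $y = x_j + \delta s$. This converts the inner double integral into one over an expanding square of size $\ell/\delta$ and allows comparison with the variational quantity defining $\C$, using the asymptotic slope $u'(x_0)$ at a Lebesgue point $x_0 \in I_j$. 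Sending $\delta \to 0$ first and $\ell \to 0$ afterwards yields $\liminf_\delta \Lambda_\delta(u_\delta, I) \ge \C \int_I |u'|^p\, dx$.

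\textbf{Main obstacle.} The decisive difficulty is the liminf inequality, since $\varphi$ is neither convex nor monotone: no Jensen or weak-lower-semicontinuity argument is available, and the classical ``freezing of coefficients'' fails. The only bridge between the non-local, non-convex functional and the local target is the cell formula defining $\C$. The crux is to verify that, at almost every Lebesgue point $x_0$ of $u'$, the rescaled functions $s \mapsto (u_\delta(x_0 + \delta s) - u_\delta(x_0))/\delta$ become asymptotically admissible competitors for the cell problem at scale $\ell/\delta \to \infty$, while keeping the boundary corrections uniformly negligible in $\delta$ and handling the further rescaling required when $|u'(x_0)|$ is large.
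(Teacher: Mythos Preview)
Your overall architecture---cell formula, recovery on piecewise affine functions, localization for the liminf---matches the paper's, but two choices in your plan diverge in ways that matter.

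\textbf{Definition of $\C$.} You take $\C$ as a thermodynamic limit of a constrained minimization with prescribed boundary values $w(0^+)=0$, $w(L^-)=L$. The paper instead defines
\[
\C = \inf\Bigl\{\liminf_{\delta\to 0}\Lambda_\delta(v_\delta,(0,1)) : v_\delta \to U \text{ in } L^p(0,1)\Bigr\},
\]
i.e.\ the $\Gamma$-liminf of $\Lambda_\delta$ at the identity on the unit interval. This choice is not cosmetic. First, your limit in $L$ is not obviously well defined (subadditivity is not free here, since gluing two competitors creates non-local cross terms that need the same surgery lemmas the paper develops). Second, and more importantly, the paper's definition immediately yields, by scaling and a contradiction argument, the local lower bound (their ``Claim~2''): if $f$ is $L^p$-close on $I'$ to an affine function of slope $c$, then $\Lambda_\delta(f,I') \ge (\C-\eps)|c|^p|I'|$ for all small $\delta$. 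No boundary information on $f$ is needed.

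\textbf{The liminf step.} Your proposed blow-up $s\mapsto (u_\delta(x_0+\delta s)-u_\delta(x_0))/\delta$ on $(0,\ell/\delta)$ is where the plan breaks. You would need this rescaled function to be an admissible competitor for your cell problem, hence to have the correct boundary values (or be close to them). But $u_\delta\to u$ in $L^p$ gives no pointwise control at $x_0$, and after dividing by $\delta$ any $L^p$ closeness on the original interval is destroyed. The paper sidesteps this entirely: it localizes at a \emph{fixed} mesoscopic scale $|I'|=1/\ell$ independent of $\delta$, uses differentiability of $u$ (via an Egorov argument on the remainder in the first-order Taylor expansion) to ensure $u_\delta$ is $L^p$-close to an affine function on each $I'$, and then invokes Claim~2. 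No rescaling of $u_\delta$ by $\delta$ ever occurs in the lower bound. For $p=1$ the paper takes an additional detour through a Heaviside cell problem and shows its constant equals $\C$.

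\textbf{Compactness.} Your appeal to \cite{BrNg-Nonlocal1} for $W^{1,p}$-regularity of the limit is not justified: those results rely on the monotonicity of $\varphi$, which is precisely the hypothesis dropped here. The paper instead first proves a weak liminf bound (with a constant $\sigma\C$ rather than $\C$) via a periodization-and-rescaling trick tied to its definition of $\C$, and deduces $u\in W^{1,p}$ from that.
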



%

\medskip
Some comments on Theorem~\ref{thm-gamma} are in order.


$\bullet$ On the precise definition of $\Lambda_0$. If $\C = 0$, by convention, $\Lambda_0(u, I) = 0$ for all $u \in L^p(I)$. 
In other words, the conclusion of \Cref{thm-gamma} asserts that {\it either} $\Lambda_\delta(\cdot, I)$ $\Gamma$-converges to 0 in $L^p(I)$ {\it or} there exists a constant $0< \C \le 1$ such that $\Lambda_\delta(\cdot, I)$ $\Gamma$-converges to $\Lambda_0(\cdot, I)$ defined by \eqref{def-Lambda0} with the  usual convention: $\Lambda_0(u, I) = + \infty$ if $u \not \in BV(I)$ for $p=1$, or if $u \not \in W^{1, p}(I)$ for $p>1$. The first part of the alternative, i.e., $\C = 0$, occurs  e.g. when 
$\varphi$ has a compact support in $[0, + \infty)$ (see \cite[Remark 3]{BrNg-Nonlocal1}; only the case $p=1$ was considered in \cite{BrNg-Nonlocal1},  however, the same conclusion holds for $p>1$ with the same proof).  The second part of the alternative, i.e., $\C>0$, happens e.g. when $\varphi$ is non-decreasing  (see  \cite{BrNg-Nonlocal1, BrNg-Nonlocalp} with roots in \cite{BourNg}); more generally, $\C > 0$ when $\liminf_{t \to + \infty} \varphi(t) > 0$. It would be very interesting to find a natural weaker sufficient condition on $\varphi$ at infinity such that $\C>0$. 

$\bullet$ 
On the condition~\eqref{cond-varphi-3}. This is just a normalization condition. Without this assumption, the conclusion of Theorem~\ref{thm-gamma} holds with \eqref{constant-gamma} replaced by 
$$
0 \le \C \le 2  \int_0^{\infty} \varphi(t) t^{-(p+1)} \, dt. 
$$
This suggests that assumptions \eqref{cond-varphi-0}-\eqref{cond-varphi-1} might be substituted by the weaker condition 
$$
\int_0^{\infty} \varphi(t) t^{-(p+1)} \, dt < + \infty. 
$$


It is worth noting that the following pointwise convergence property 
holds for $\Lambda_\delta$:

\begin{proposition}\label{pro-pointwise} Let $p \ge 1$ and let $\varphi$ satisfy \eqref{cond-varphi-0}-\eqref{cond-varphi-3}. Then,  
\begin{enumerate}
\item[i)] for $p>1$ and for $u  \in W^{1, p}(I)$, 
\end{enumerate}
or
\begin{enumerate}
\item[ii)] for $p=1$ and  for $u \in C^1(\bar I)$ if $I$ is bounded (resp. $u \in C^1_c(\bar I)$ if $I$ is unbounded), 
\end{enumerate}
we have
\begin{equation}\label{pro-pointwise-s}
\lim_{\delta \to 0} \Lambda_{\delta} (u, I) = \int_{I} |u' |^p \, dx. 
\end{equation} 
\end{proposition}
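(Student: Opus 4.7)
The plan is to rewrite $\Lambda_\delta(u,I)$ as a single outer integral over $I$ via the change of variables $y=x+\delta k$, and then pass to the limit $\delta\to 0$ by applying dominated convergence twice (first in $k$ for each fixed $x$, then in $x$). Setting
$$
F_\delta(x):=\int_{\{k\in\mR\,:\,x+\delta k\in I\}}\frac{\varphi\bigl(|u(x+\delta k)-u(x)|/\delta\bigr)}{|k|^{p+1}}\,dk,
$$
the substitution yields $\Lambda_\delta(u,I)=\int_I F_\delta(x)\,dx$, so \eqref{pro-pointwise-s} will follow from the pointwise limit $F_\delta(x)\to|u'(x)|^p$ combined with an integrable dominant in $x$.

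I would first establish the claim for $u\in C^1(\bar I)$ with $I$ bounded (the case $u\in C^1_c(\bar I)$ on an unbounded $I$ reduces to this on a compact neighborhood of $\supp u$, the complementary region being handled by crude estimates exploiting that $u\equiv 0$ there and that the measure of admissible $k$ shrinks as $\delta\to 0$). Set $L:=\|u'\|_{L^\infty(I)}$. For each fixed $x\in I$ and each $k\ne 0$, smoothness of $u$ gives $\bigl(u(x+\delta k)-u(x)\bigr)/\delta\to u'(x)\,k$, and since $\varphi$ has only finitely many discontinuity points in $(0,+\infty)$ the integrand in $F_\delta(x)$ converges for a.e.\ $k$. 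An integrable dominant comes from $|u(x+\delta k)-u(x)|/\delta\le L|k|$ together with \eqref{cond-varphi-0} on $\{|k|\le 1/L\}$ (where $L|k|\le 1$) and \eqref{cond-varphi-1} elsewhere, yielding
$$
\frac{\varphi\bigl(|u(x+\delta k)-u(x)|/\delta\bigr)}{|k|^{p+1}}\le\alpha L^{p+1}\chi_{\{|k|\le 1/L\}}+\frac{\beta}{|k|^{p+1}}\chi_{\{|k|>1/L\}}\in L^1(\mR).
$$
Dominated convergence then produces
$$
F_\delta(x)\longrightarrow\int_{\mR}\frac{\varphi(|u'(x)|\,|k|)}{|k|^{p+1}}\,dk,
$$
which for $u'(x)\ne 0$ evaluates to $|u'(x)|^p$ by the substitution $s=|u'(x)|\,|k|$ and the normalization \eqref{cond-varphi-3}; at points where $u'(x)=0$ the integrand collapses to $\varphi(0)/|k|^{p+1}=0$, matching $|u'(x)|^p$. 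The same dominant gives $F_\delta(x)\le C(L,\alpha,\beta,p)$ uniformly in $\delta$ and $x$, so a second dominated convergence on the bounded $I$ delivers \eqref{pro-pointwise-s}.

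For $u\in W^{1,p}(I)$ with $p>1$, I would then proceed by density, approximating $u$ by $u_n\in C^1\cap W^{1,p}(I)$ (compactly supported when $I$ is unbounded, which is admissible since $C^\infty_c$ is dense in $W^{1,p}(\mR)$) and transferring convergence via a uniform-in-$\delta$ comparison $|\Lambda_\delta(u,I)-\Lambda_\delta(u_n,I)|\le C\|u'-u_n'\|_{L^p(I)}^p$, established by the same splitting-by-scale argument that produced the dominant above.

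The main technical obstacle is the uniform bound for the inner dominated convergence: it must combine \eqref{cond-varphi-0} and \eqref{cond-varphi-1} simultaneously at a transition scale $|k|\sim 1/L$ depending on the Lipschitz constant of $u$. This scale-dependence on $L$ is precisely why the argument does not extend mechanically beyond Lipschitz regularity and forces the density passage in the $W^{1,p}$ case.
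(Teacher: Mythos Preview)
The paper does not prove this proposition itself; it defers to \cite[Theorem~1]{BrNg-Nonlocalp} for case~i) and to \cite[proof of Proposition~1]{BrNg-Nonlocal1} for case~ii), noting only that the monotonicity hypothesis used there is easily dispensed with. Your treatment of the $C^1$ case---the change of variables $y=x+\delta k$ followed by double dominated convergence with the scale-split dominant---is correct and is the standard mechanism.

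The gap is in the density passage for $u\in W^{1,p}(I)$ with $p>1$. The asserted uniform comparison
\[
|\Lambda_\delta(u,I)-\Lambda_\delta(u_n,I)|\le C\,\|u'-u_n'\|_{L^p(I)}^p
\]
is false already for an elementary reason: with $\varphi=\tfrac{p}{2}\mathds{1}_{(1,+\infty)}$, $I=(0,1)$, $u(x)=x$, $u_n(x)=(1+\tfrac1n)x$, one computes $\Lambda_\delta(u_n)-\Lambda_\delta(u)\to(1+\tfrac1n)^p-1\sim p/n$ as $\delta\to 0$, whereas $\|u'-u_n'\|_{L^p}^p=n^{-p}$. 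More to the point, ``the same splitting-by-scale argument'' bounds $\varphi$ itself, not a difference $\varphi(a)-\varphi(b)$; since $\varphi$ is allowed jump discontinuities there is no modulus-of-continuity control available, and since $\Lambda_\delta$ is non-convex and non-subadditive there is no substitute triangle inequality. The correct fix is not density but to push the direct argument through for $W^{1,p}$: replace the global Lipschitz constant $L$ by the pointwise bound $|u(x+\delta k)-u(x)|/\delta\le 2|k|\,Mu'(x)$, with $Mu'$ the Hardy--Littlewood maximal function of $u'$. Your scale-split then yields the $x$-dominant $F_\delta(x)\le C\bigl(Mu'(x)\bigr)^p$, which lies in $L^1(I)$ by the maximal inequality precisely because $p>1$, while the inner pointwise limit holds at every Lebesgue point of $u'$. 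This is the mechanism behind the result cited from \cite{BrNg-Nonlocalp}, and it explains why the $p=1$ case genuinely needs the stronger $C^1$ hypothesis.
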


The conclusion  of \Cref{pro-pointwise} under the assumption $i)$ follows from \cite[Theorem 1]{BrNg-Nonlocalp} (the only remaining case to be considered is the case $I = (0, + \infty)$ which can be deduced from the cases $I$ bounded and $I = \mR$ by standard arguments). 
The proof of  \Cref{pro-pointwise} under the assumption $ii)$ appeared in \cite[proof of Proposition 1]{BrNg-Nonlocal1}
under the additional assumption
\begin{equation}\label{cond-varphi-mono}
\mbox{$\varphi$ is non-decreasing};    
\end{equation} 
however, this assumption can  be easily  removed from the proof.  The conclusion of \Cref{pro-pointwise} contrasts with the conclusion of Theorem~\ref{thm-gamma} since 
it may happen, for some functions $\varphi$ $\big(\mbox{e.g. } \varphi : = \frac{p}{2} \mathds{1}_{(1,  + \infty)}\big)$,  that  $\C$ is {\it strictly} less than 1  (see \cite{NgGammaCRAS}); an explicit value of $\C$ for this $\varphi$ is given in \cite{AGMP}.  As  established in \cite{AGP}, it may happen  that $\C(\varphi) = 1$ for some $\varphi$. 

This work is a follow-up of  our previous papers \cite{BrNg-Nonlocal1,BrNg-Nonlocalp} where we investigated   a similar problem in any dimension $d \ge 1$. More precisely, $I$ is replaced by a domain $\Omega \subset \mR^d$ 
and the RHS in \eqref{def-Lambda} is replaced by 
$$
\int_\Omega \int_\Omega \frac{\varphi(|u(x) - u(y)|) }{|x - y|^{p  + d}} \, dx \,
dy.
$$
Assuming  \eqref{cond-varphi-0}, \eqref{cond-varphi-1}, and the {\it additional condition} \eqref{cond-varphi-mono},  we established in \cite{BrNg-Nonlocal1,BrNg-Nonlocalp} the $\Gamma$-convergence of $\Lambda_\delta$ to a multiple of $\int_{\Omega} |\nabla u|^p \, dx$.  
In these works, the monotonicity assumption \eqref{cond-varphi-mono} played a crucial role at almost every level of the proofs. The proof of \Cref{thm-gamma} has its roots in \cite{BourNg, NgGamma, BrNg-Nonlocal1, BrNg-Nonlocalp}.  However, many  new ideas are required to overcome the lack of assumption  \eqref{cond-varphi-mono}. We do not know whether  \eqref{cond-varphi-mono} can be removed when $d > 1$.  


\section{Proof of the main result}\label{sect-proof}

We first recall  the meaning of $\Gamma$-convergence. One says  that  $\Lambda_\delta(\cdot, I)$ $\dsp \mathop{\to}^{\Gamma} \Lambda_0(\cdot, I)$ in $L^p(I)$ for $p \ge 1$ as $\delta \to 0$ if  the following two properties hold 
\begin{enumerate}
\item[(G1)] For each $g \in L^p(I)$ and for {\it every} family
$(g_\delta) \subset L^p(I)$ such that
$g_\delta$ converges to $g$ in $L^p(I)$ as $\delta \to 0$,
one has
\begin{equation*}
\liminf_{\delta \to 0} \Lambda_\delta(g_\delta, I) \ge \Lambda_0(g, I). 
\end{equation*}
\item[(G2)] For each $g \in L^p(I)$, there {\it exists} a family
$(g_\delta) \subset L^p(I)$ such that
$g_\delta$ converges to $g$ in $L^p(I)$ as $\delta \to 0$,
and
\begin{equation*}
\limsup_{\delta \to 0} \Lambda_\delta(g_\delta, I) \le \Lambda_0(g, I).
\end{equation*}
\end{enumerate}


In this section, we establish  properties (G1) and (G2)  with $\Lambda_0$ defined by \eqref{def-Lambda0} and  $\C$ defined by 
\begin{equation}\label{def-k}
\C :=  \inf \liminf_{\delta \to 0} \Lambda_{\delta} (v_\delta, (0, 1)),
\end{equation}
where the infimum is taken over all families $(v_\delta) \subset L^p(0, 1)$ such that $v_\delta \to U$ in $L^p(0, 1)$ as $\delta \to 0$, where 
$$
U(x) := x \mbox{ for } x \in (0, 1).
$$

Choosing $I = (0, 1)$ and $u = U$ in  \Cref{pro-pointwise}, we see  that  the constant $\C$ given by \eqref{def-k} satisfies 
$0 \le \C \le 1$. 

\begin{remark} \label{rem-k} \rm As a direct consequence of the definition of $\C$ in  \eqref{def-k}, the following property holds
$$
\liminf_{k \to + \infty} \Lambda_{\delta_k}(g_k, (0, 1)) \ge \C, 
$$
for every $(\delta_k) \subset \mR_+$ and $(g_k) \subset L^p(0, 1)$ such that $\delta_k \to 0$ and $g_k \to U$ in $L^p(0, 1)$ as $k \to + \infty$. 
\end{remark}

We will only consider the case $I = \mR$. The other cases can be handled as in \cite{BrNg-Nonlocal1} and are left to the reader. The rest of this paper is organized as follows. \Cref{sect-G2} is devoted to the proof of Property (G2). The proofs of Property (G1) for $p=1$ and $p>1$ are given in \Cref{sect-G1-p,sect-G1-1}, respectively. 

For $p \ge 1$ and  $\delta > 0$, we will denote
$$
\varphi_\delta(t) := \delta^p \varphi(t/\delta) \mbox{ for } t \ge 0. 
$$

\subsection{Proof of Property (G2)}\label{sect-G2}

The proof of Property (G2) is based on the following three lemmas which are valid for $\C$ defined by \eqref{def-k}, possibly equal to $0$. We begin with  

\begin{lemma}\label{lem1} 
Let $p \ge 1$ and let $\varphi$ satisfy \eqref{cond-varphi-0}-\eqref{cond-varphi-3}.
There exists a family  $(v_\delta) \subset L^p(0, 1)$ converging to $U$  in $L^p(0, 1)$,  as $\delta \to 0$,  such that 
\begin{equation}\label{lem1-statement}
\lim_{\delta \to 0} \Lambda_{\delta} (v_\delta, (0, 1)) = \C. 
\end{equation}
\end{lemma}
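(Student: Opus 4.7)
\emph{Plan.} Lemma~\ref{lem1} is a standard diagonal-selection consequence of the definition \eqref{def-k} of $\C$ as an infimum of $\liminf$'s, combined with Remark~\ref{rem-k} for the matching lower bound. The plan is to extract, from a sequence of almost-minimizing families for \eqref{def-k}, a single diagonal family along which both the $L^p$-closeness to $U$ and the near-minimality of $\Lambda_\delta$ are realized simultaneously.

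By \eqref{def-k}, for each integer $n \ge 1$ I can choose a family $(v^{(n)}_\delta)_{\delta>0} \subset L^p(0,1)$ with $v^{(n)}_\delta \to U$ in $L^p(0,1)$ as $\delta \to 0$ and
$$
\liminf_{\delta \to 0} \Lambda_\delta\bigl(v^{(n)}_\delta, (0,1)\bigr) \le \C + \frac{1}{n}.
$$
Unfolding the definitions of $\liminf$ and of $L^p$-convergence, I would then pick inductively a strictly decreasing sequence $\tau_n \searrow 0$ with $\tau_n < 1/n$ and
$$
\bigl\| v^{(n)}_{\tau_n} - U \bigr\|_{L^p(0,1)} \le \frac{1}{n}, \qquad \Lambda_{\tau_n}\bigl(v^{(n)}_{\tau_n}, (0,1)\bigr) \le \C + \frac{2}{n}.
$$
Setting $v_{\tau_n} := v^{(n)}_{\tau_n}$ yields a sequence with $v_{\tau_n} \to U$ in $L^p(0,1)$ and $\limsup_{n \to \infty} \Lambda_{\tau_n}(v_{\tau_n}, (0,1)) \le \C$. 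If a family indexed continuously by $\delta$ is desired, I would extend by $v_\delta := v_{\tau_n}$ for $\delta \in (\tau_{n+1}, \tau_n]$; only the values at $\delta = \tau_n$ are used in the subsequent application to Property (G2), so this extension is harmless.

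For the matching lower bound, I would invoke Remark~\ref{rem-k} directly on $(\tau_n)$ and $(v_{\tau_n})$, yielding $\liminf_{n \to \infty} \Lambda_{\tau_n}(v_{\tau_n}, (0,1)) \ge \C$. Combining the two estimates gives $\lim_{n \to \infty} \Lambda_{\tau_n}(v_{\tau_n}, (0,1)) = \C$, which is \eqref{lem1-statement}. The only nontrivial point is the diagonal choice of $\tau_n$: one must simultaneously trap $L^p$-closeness and energy-closeness to the infimum in \eqref{def-k} at a common parameter $\tau_n$, but this is immediate from the definition of $\liminf$ (namely, for every $\eta > 0$ the set $\{\delta \in (0,\eta) : \Lambda_\delta(v^{(n)}_\delta, (0,1)) \le \C + 2/n\}$ is non-empty) intersected with the neighborhood of $0$ on which $\|v^{(n)}_\delta - U\|_{L^p}$ is small. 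Accordingly I do not anticipate any substantial obstacle at this stage; the genuinely delicate constructions are reserved for the subsequent lemmas that use Lemma~\ref{lem1} to build recovery sequences for general $g \in L^p(I)$.
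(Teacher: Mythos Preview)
Your diagonal argument produces a \emph{sequence} $(\tau_n, v_{\tau_n})$ with $v_{\tau_n}\to U$ and $\Lambda_{\tau_n}(v_{\tau_n},(0,1))\to\C$, but the lemma asks for a genuine \emph{family} $(v_\delta)_{\delta>0}$ with $\lim_{\delta\to 0}\Lambda_\delta(v_\delta,(0,1))=\C$. Your proposed extension $v_\delta:=v_{\tau_n}$ for $\delta\in(\tau_{n+1},\tau_n]$ does not achieve this: for fixed $v$, the quantity $\Lambda_\delta(v,(0,1))=\delta^p\Lambda(v/\delta,(0,1))$ depends nontrivially on $\delta$, and knowing $\Lambda_{\tau_n}(v_{\tau_n})\le\C+2/n$ says nothing about $\Lambda_\delta(v_{\tau_n})$ for other $\delta$ in the interval. (A rescaling $v_\delta=(\delta/\tau_n)v_{\tau_n}$ would fix the energy via $\Lambda_\delta(v_\delta)=(\delta/\tau_n)^p\Lambda_{\tau_n}(v_{\tau_n})$, but then $v_\delta\to U$ fails unless $\tau_{n+1}/\tau_n\to 1$, which you have no way to arrange from the bare definition of $\C$.) Your claim that ``only the values at $\delta=\tau_n$ are used in the subsequent application'' is also not right: Lemma~\ref{lem2} and Lemma~\ref{lemaffine1} both need families, and the final diagonal selection in the proof of (G2) requires control of $\Lambda_\delta(g_{n,\delta})$ for \emph{all} small $\delta$, not along a prescribed sequence.

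This is exactly the difficulty the paper's proof is built to overcome, and it is not a bookkeeping matter. Starting from a minimizing sequence $(\delta_k,u_k)$, the paper first modifies $u_k$ to $\hat u_k$ that agrees with $U$ on small neighborhoods of $0$ and $1$ (the careful choice of the surgery points $x_{1,k},x_{2,k}$ via \eqref{lem1-o1}--\eqref{lem1-o4} is needed so that this modification costs only $O(c_k)$ in energy). Then, for each $\delta$ in a layer $(\tau_{k+1},\tau_k]$, it \emph{periodizes}: it glues $m=[\delta_k/\delta]$ rescaled copies of $\hat u_k$ to form $v_\delta$, so that the scaling identity gives $\Lambda_\delta(v_\delta,(0,1))=\frac{m^{p-1}}{\hat m^p}\Lambda_{\delta_k}(\hat v_\delta,(0,m))$ with $m/\hat m\to 1$, while the oscillation of $v_\delta$ around $U$ is $O(1/m)\to 0$. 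The boundary modification is what allows the copies to be joined without creating spurious energy at the seams. This interpolation from the discrete $\delta_k$'s to the continuum of $\delta$'s is the actual content of the lemma; your proposal skips it.
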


\begin{proof}  
From the definition of $\C$ in \eqref{def-k}, there exist a
sequence $(\delta_k) \subset \mR_+$ converging to 0  and a sequence  $(u_{k}) \subset L^p(0, 1)$
converging to $U$ in $L^p(0, 1)$ such that
\begin{equation}\label{lem1-start}
\lim_{k \to + \infty} \Lambda_{\delta_k} (u_k, (0, 1)) = \C.
\end{equation}
Let $(c_k)$ be a sequence of positive numbers converging to $0$ such that, for large $k$,  
\begin{equation}\label{lem1-p1}
c_k \ge \delta_k^{1/2}, 
\end{equation}
\begin{equation}\label{lem1-p2}
\int_{0}^1 |u_k - U|^p \, dx  \le c_k^{p+1}, 
\end{equation}
\begin{equation}\label{lem1-p3}
\Lambda_{\delta_{k}}(u_k, (0, 1)) \le \C + c_{k}, 
\end{equation}
\begin{equation}\label{lem1-p4}
\Lambda_{\delta_{k}}(u_k, (c_k, 1 - c_k)) \ge \C(1- 2 c_k) - c_k. 
\end{equation}
Such a sequence $(c_k)$ exists; indeed, from the definition of $\C$,  by a change of variables, we have
\begin{equation*}
\liminf_{k \to + \infty} \Lambda_{\delta_k}(u_k, (c, 1 - c)) \ge \C(1- 2 c), 
\end{equation*} 
for every $c \in (0, 1/2)$.  Hereafter, we only consider large $k$ so that \eqref{lem1-p1}-\eqref{lem1-p4} hold. 

In what follows in this proof, $C$ denotes  positive  constants which depend only on $\alpha$,  $\beta$, and $p$ and can vary from one place to another. From  \eqref{lem1-p3}
and \eqref{lem1-p4}  and the fact that $\C \le 1$, 
we obtain 
\begin{equation}\label{est-mean}
\int_{c_k/2}^{c_k} \int_0^1 \frac{\varphi_{\delta_k}(|u_k(x) - u_k(y)|)}{|x- y|^{p+1}} \, dx \, dy \le C c_k. 
\end{equation} 
By \eqref{lem1-p2} and \eqref{est-mean},  there exists 
\begin{equation}\label{lem1-x1k}
x_{1, k} \in (c_k/2, c_k)
\end{equation} 
such that
\begin{equation}\label{lem1-o1}
|u_k(x_{1, k}) - x_{1, k}| \le C c_k
\end{equation} 
and
\begin{equation}\label{lem1-o2}
\int_0^1 \frac{\varphi_{\delta_k}(|u_k(x_{1,k}) - u_k(y)|)}{|x_{1, k}- y|^{p+1}}  dy \le C.  
\end{equation} 
Similarly, there exists 
\begin{equation}\label{lem1-x2k}
x_{2, k} \in (1 - c_k, 1- c_{k}/2)
\end{equation} 
such that 
\begin{equation}\label{lem1-o3}
|u_k(x_{2, k}) - x_{2, k}| \le C c_k
\end{equation} 
and
\begin{equation}\label{lem1-o4}
\int_0^1 \frac{\varphi_{\delta_k}(|u_k(x_{2,k}) - u_k(y)|)}{|x_{2, k}- y|^{p+1}}  dy  \le C.  
\end{equation} 

We now modify $u_k$ to obtain a new sequence $(\hat u_k)$ such that $\hat u_k \to U$ in $L^p(0, 1)$, 
\eqref{lem1-start} is preserved for $\hat u_k$, i.e., 
\begin{equation}\label{lem1-start-1}
\lim_{k \to + \infty} \Lambda_{\delta_k} (\hat u_k, (0, 1)) = \C,  
\end{equation}
and {\it in addition} 
$$
\hat u_k = U \mbox{ in suitable neighborhoods of 0 and 1}. 
$$
Define $\hat u_k: (0, 1 ) \to \mR$ as follows
\begin{equation*}
\hat u_k (x)  : = \left\{\begin{array}{cl}
x& \mbox{ if } 0  <   x < \frac{x_{1, k}}{3}, \\[6pt]
u_k(x_{1, k}) & \mbox{ if } \frac{2x_{1, k}}{3} <   x < x_{1, k}, \\[6pt]
u_k(x) & \mbox{ if } x_{1, k} \le x \le x_{2, k}, \\[6pt]
u_k(x_{2, k}) & \mbox{ if } x_{2, k} < x <  \frac{1+2x_{2, k}}{3}, \\[6pt]
x & \mbox{ if } \frac{2+x_{2, k}}{3} < x <  1, 
\end{array}\right.
\end{equation*}
and $\hat u_k$ is chosen in $[\frac{x_{1,k}}{3}, \frac{2 x_{1, k}}{3}] \cup [ \frac{1+2x_{2, k}}{3}, \frac{2+x_{2, k}}{3}]$ in such a way that it is affine there and $\hat u_k$ is  continuous at the end points.

We claim that 
\begin{equation}\label{lem1-e}
\Lambda_{\delta_k} (\hat u_k, (0, 1)) \le \C + C c_k. 
\end{equation}
For this purpose, we  estimate  $\Lambda_{\delta_k} (\hat u_k, (0, 1))$ writing 
\begin{align}\label{lem1-decomp}
 \Lambda_{\delta_k} & (\hat u_k, (0, 1))  \\[6pt]
 \le &  \Lambda_{\delta_k}(\hat u_k, (0,  x_{1, k})) + 2
\int_{2x_{1,k}/3}^{x_{1, k}} \, dx  \int_{x_{1, k}}^{x_{2, k}} \frac{\varphi_{\delta_k}(|\hat u_k(x) - \hat u_k(y)|)}{|x - y|^{p+1}}  dy  \nonumber \\[6pt]
& + \Lambda_{\delta_k}(\hat u_k, (x_{2, k},  1)) 
+ 2  \int_{x_{2,k}}^{ (1 + 2x_{2, k})/3}  \int_{x_{1, k}}^{x_{2, k}} \frac{\varphi_{\delta_k}(|\hat u_k(x) - \hat u_k(y)|)}{|x - y|^{p+1}}  dy \, dx  \nonumber \\[6pt]
& +  \Lambda_{\delta_k}(\hat u_k, (x_{1, k},  x_{2, k}))   + \mathop{\int_0^1 \int_0^1}_{|x - y| > \min\{x_{1, k}/3, (1- x_{2, k})/3 \}} \frac{\varphi_{\delta_k}(|\hat u_k(x) - \hat u_k(y)|)}{|x - y|^{p+1}}  dy \, dx. \nonumber  \\[6pt]
& : = I + II + III + IV + V + VI.  \nonumber
\end{align}
We begin with $I$. We have, by \eqref{cond-varphi-0} and \eqref{cond-varphi-1} 
\begin{align*}
I = &  \Lambda_{\delta_k}(\hat u_k, (0,  x_{1, k}))  = \int_0^{x_{1, k}} \int_0^{x_{1, k}} \frac{\varphi_{\delta_k}(|\hat u_k(x) - \hat u_k(y)|) }{|x - y|^{p + 1}} \, dx \, dy \\[6pt]
\le &  C \mathop{\int_0^{x_{1, k}} \int_0^{x_{1, k}}}_{|x - y | \le \delta_k}   \frac{ \delta_k^{p} |\hat u_k(x) - \hat u_k(y)|^{p+1}}{\delta_k^{p+1}} \frac{1}{|x-y|^{p+1}} \, dx \, dy \\[6pt]
& + C \mathop{\int_0^{x_{1, k}} \int_0^{x_{1, k}}}_{|x - y | > \delta_k}  \frac{ \delta_k^p }{|x - y|^{p + 1}} \, dx \, dy. 
\end{align*}
Since $|\hat u_k(x) - \hat u_k(y)| \le C |x - y|$ for $x,y \in (0, x_{1, k})$, we obtain 
\begin{equation*}
I
\le C \mathop{\int_0^{x_{1, k}} \int_0^{x_{1, k}}}_{|x - y | \le \delta_k}  \delta_k^{-1} \, dx \, dy + C \mathop{\int_0^{x_{1, k}} \int_0^{x_{1, k}}}_{|x - y | > \delta_k}  \frac{ \delta_k^p }{|x - y|^{p + 1}} \, dx \, dy. 
\end{equation*}
It follows from straightforward integral estimates  that 
\begin{equation}\label{lem1-e1}
I \le C  x_{1, k} \mathop{\le}^{\eqref{lem1-x1k}} C c_k. 
\end{equation}

We next consider $II$.  It is clear from the definition of $\hat u_k$ that 
\begin{multline*}
II = 2 \int_{2x_{1,k}/3}^{x_{1, k}} \, dx  \int_{x_{1, k}}^{x_{2, k}} \frac{\varphi_{\delta_k}(|\hat u_k(x) - \hat u_k(y)|)}{|x - y|^{p+1}}  dy  \\[6pt]
\le  \frac{2 x_{1, k}}{3}\int_{x_{1, k}}^{x_{2, k}}   \frac{\varphi_{\delta_k}(|u_k(x_{1, k}) -  u_k(y)|)}{|x_{1, k}- y|^{p+1}}  dy, 
\end{multline*}
which implies, by \eqref{lem1-x1k} and \eqref{lem1-o2},  
\begin{equation}\label{lem1-e2}
II   \le C c_k. 
\end{equation}

Similarly, using \eqref{lem1-o3} and \eqref{lem1-o4}, one has
\begin{equation}\label{lem1-e3}
III  = \Lambda_{\delta_k}(\hat u_k, (x_{1, k},  x_{2, k}))  \le C c_k 
\end{equation}
and
\begin{equation}\label{lem1-e4}
IV  = 2  \int_{x_{2,k}}^{ (1 + 2x_{2, k})/3}  \int_{x_{1, k}}^{x_{2, k}} \frac{\varphi_{\delta_k}(|\hat u_k(x) - \hat u_k(y)|)}{|x - y|^{p+1}}  dy \, dx \le C c_k.
\end{equation}

It is clear  from  \eqref{lem1-p3} that 
\begin{equation}\label{lem1-e4-1}
V = \Lambda_{\delta_k}(\hat u_k, (x_{1, k},  x_{2, k}))   \le \Lambda_{\delta_k} (u_k, (0, 1)) \le \C +  c_k. 
\end{equation}

We now consider $VI$. We  have, for every $c > 0$,  
\begin{equation*}
\mathop{\int_0^1 \int_0^1}_{|x - y| > c} \frac{\varphi_{\delta_k}(|\hat u_k(x) - \hat u_k(y)|)}{|x - y|^{p+1}}  dy \, dx \mathop{\le}^{\eqref{cond-varphi-1}} C \delta_k^p/ c^p, 
\end{equation*}
which yields 
\begin{multline*}
VI = \mathop{\int_0^1 \int_0^1}_{|x - y| > \min\{x_{1, k}/3, (1- x_{2, k})/3 \}} \frac{\varphi_{\delta_k}(|\hat u_k(x) - \hat u_k(y)|)}{|x - y|^{p+1}}  dy \, dx \\[6pt]
\le \frac{C\delta_k^p}{\min\{x_{1, k}/3, (1- x_{2, k})/3 \}^p}. 
\end{multline*}
From \eqref{lem1-x1k} and \eqref{lem1-x2k}, we derive  that 
\begin{equation}\label{lem1-e5}
VI \le  C \delta_k^p/c_k^p \mathop{ \le }^{\eqref{lem1-p1}} C c_k. 
\end{equation}
Combining \eqref{lem1-decomp}-\eqref{lem1-e5}  yields 
\begin{equation*}
\Lambda_{\delta_k} (\hat u_k, (0, 1)) \le \C + C c_k.  
\end{equation*}
The proof of Claim \eqref{lem1-e} is complete.  In view of the definition of $\C$, we obtain \eqref{lem1-start-1} from \eqref{lem1-e}.

As a consequence of \eqref{lem1-e}, we have
\begin{equation}\label{lem1-e-***}
\limsup_{k \to + \infty} \Lambda_{\delta_k} (\hat u_k, (0, 1)) \le \limsup_{k \to + \infty} \Lambda_{\delta_k}(u_k, (0, 1)).   
\end{equation}

From $(\hat u_k)$, we now construct {\it a family} $(v_\delta) \subset L^p(0, 1)$  such that $v_\delta \to U$ in $L^p(0, 1)$ and 
\begin{equation*}
\lim_{\delta \to 0} \Lambda_{\delta} (v_\delta, (0, 1)) = \C.  
\end{equation*}

Let $(\tau_k) \subset (0,1)$ be a decreasing sequence converging to 0 such that 
\begin{equation}\label{tau}
\tau_k \le \delta_k c_k.
\end{equation}
For each $\delta \in (0, 1)$ small,  let $k = k(\delta)$ be such that $\tau_{k(\delta)+1} < \delta \le  \tau_{k(\delta)}$. Define
$ \hat m = \hat m (\delta)=\delta_{k(\delta)}/\delta$ and set $m = m (\delta)= [\hat m (\delta)]$, the
largest integer less than or equal to $\hat m (\delta)$.  Then, by \eqref{tau},  
\begin{equation*}
\hat m (\delta) \ge \delta_{k(\delta)}/ \tau_{k(\delta)} \ge 1/ c_{k(\delta)}. 
\end{equation*}
This implies 
\begin{equation}\label{lem1-m}
\hat m (\delta)/ m (\delta) \to 1 \mbox{ as } \delta \to 0. 
\end{equation}
In what follows, for notational ease, we delete the dependence on $\delta$ in $k(\delta)$, $m (\delta)$ and $\hat m (\delta)$. 

Consider $v_\delta: (0,1) \to
\mR$ defined as follows
\begin{equation*}
v_\delta(x) : = \frac{1}{\hat m} \hat v_\delta (m x),
\end{equation*}
where $\hat v_\delta: (0, m) \to \mR$ is given by, for $\tau_{k+1} < \delta \le  \tau_{k}$,  
\begin{equation}\label{lem1-def-hatv}
\hat v_\delta (x) : = [x]  + \hat u_k (x - [x]). 
\end{equation}
Then
\begin{equation}\label{lem1-est1*}
\Lambda_\delta(v_\delta, (0, 1)) = \frac{m^{p-1}}{\hat m^p} \Lambda_{\delta_k} (\hat v_\delta, (0, m)). 
\end{equation}
We have
\begin{multline}\label{lem1-decomposition}
\Lambda_{\delta_k} (\hat v_\delta, (0, m))  =  \sum_{i=0}^{m-1} 
\Lambda_{\delta_k} (\hat v_\delta, (i, i+1)) 
\\[6pt]+ \sum_{i=0}^{m-1} \int_{i}^{i+1} \, dx \mathop{\int}_{(0, m) \setminus (i, i+1)} \frac{\varphi_{\delta_k}(|\hat v_\delta(x) - \hat v_\delta(y)|)}{|x-y|^{p+1}}  \, dy. 
\end{multline}

It is clear from the definition of $v_\delta$ that, for $0 \le i \le m-1 $,  
\begin{equation}\label{lem1-est1*-1}
\Lambda_{\delta_k} (\hat v_\delta, (i, i+1)) = \Lambda_{\delta_k} (\hat u_k, (0, 1))
\end{equation}
and, that, with $s_k = \min\{x_{1,k},  1- x_{2, k} \}/3$,  
\begin{multline}\label{lem1-est1*-2-1}
\int_{i}^{i+1} \, dx \mathop{\int}_{(0, m) \setminus (i, i+1)} \frac{\varphi_{\delta_k}(|\hat v_\delta(x) - \hat v_\delta(y)|)}{|x-y|^{p+1}}  \, dy  \\[6pt]
=  \int_{i}^{i+1} \, dx \mathop{\mathop{\int}_{(0, m) \setminus (i, i+1)}}_{|y-x| \le s_k} \frac{\varphi_{\delta_k}(|\hat v_\delta(x) - \hat v_\delta(y)|)}{|x-y|^{p+1}}  \, dy \\[6pt]
+ \int_{i}^{i+1} \, dx \mathop{\mathop{\int}_{(0, m) \setminus (i, i+1)}}_{|y-x| \ge s_k} \frac{\varphi_{\delta_k}(|\hat v_\delta(x) - \hat v_\delta(y)|)}{|x-y|^{p+1}}  \, dy.  
\end{multline}
Note that, by \eqref{lem1-x1k} and \eqref{lem1-x2k},  
$$
s_k \sim c_k. 
$$
By the same method used to establish \eqref{lem1-e5}, we have 
\begin{equation}\label{lem1-est1*-2-2}
 \int_{i}^{i+1} \, dx \mathop{\mathop{\int}_{(0, m) \setminus (i, i+1)}}_{|y-x| \ge s_k} \frac{\varphi_{\delta_k}(|\hat v_\delta(x) - \hat v_\delta(y)|)}{|x-y|^{p+1}}  \, dy \le C c_k. 
\end{equation}
Note,  from the definition of $\hat v_\delta$,  that $|\hat v_\delta(x) - \hat v_{\delta} (y)| = |x -y|$ for $x \in (i, i+1)$ and $y \in (0, m) \setminus (i, i+1)$  with $|y-x| \le s_k$.  By the same method used to establish \eqref{lem1-e1}, we have 
\begin{equation}\label{lem1-est1*-2-3}
 \int_{i}^{i+1} \, dx \mathop{\mathop{\int}_{(0, m) \setminus (i, i+1)}}_{|y-x| \le s_k} \frac{\varphi_{\delta_k}(|\hat v_\delta(x) - \hat v_\delta(y)|)}{|x-y|^{p+1}}  \, dy \le   C c_k. 
\end{equation}
We derive from  \eqref{lem1-est1*-2-1}-\eqref{lem1-est1*-2-3} that 
\begin{equation}\label{lem1-est1*-2}
\int_{i}^{i+1} \, dx \mathop{\int}_{(0, m) \setminus (i, i+1)} \frac{\varphi_{\delta_k}(|\hat v_\delta(x) - \hat v_\delta(y)|)}{|x-y|^{p+1}}  \, dy  \le C c_k. 
\end{equation}
Combining \eqref{lem1-est1*}-\eqref{lem1-est1*-2} and  using \eqref{lem1-p1} and \eqref{lem1-e}, we have
\begin{equation}\label{lem1-est2}
\Lambda_{\delta_k} (\hat v_\delta, (0, m))  \le  m (\C + C c_k). 
\end{equation}
We deduce from \eqref{lem1-m}, \eqref{lem1-est1*}, and \eqref{lem1-est2} that 
\begin{equation}\label{lem1-cl1}
\limsup_{\delta \to 0} \Lambda_{\delta} (v_\delta, (0, 1)) \le \C. 
\end{equation}

It is clear from \eqref{lem1-m} that 
\begin{equation*}
\lim_{\delta \to 0} \int_{0}^1 |v_\delta - U|^p \, dx = \lim_{\delta \to 0} \frac{1}{m^p} \int_{0}^1 |\hat v_\delta (mx) - mx|^p \, dx, 
\end{equation*} 
which yields, by a change of variables, 
\begin{equation*}
\lim_{\delta \to 0} \int_{0}^1 |v_\delta - U|^p \, dx = \lim_{\delta \to 0} \frac{1}{m^{p+1}} \int_{0}^m |\hat v_\delta (x) - x|^p \, dx. 
\end{equation*} 
Since 
$$
\int_{0}^m |\hat v_\delta (x) - x|^p \, dx = \sum_{j=0}^{m-1} \int_{j}^{j+1} |\hat v_\delta (x) - x|^p \, dx, 
$$
it follows from \eqref{lem1-p2} and \eqref{lem1-def-hatv} that 
\begin{equation}\label{lem1-cl2}
\lim_{\delta \to 0} \int_{0}^1 |v_\delta - U|^p \, dx = 0. 
\end{equation} 

Combining \eqref{lem1-cl1} and \eqref{lem1-cl2}, and using the definition of $\C$, we obtain \eqref{lem1-statement}. The proof is complete. 
\end{proof}

We next establish

\begin{lemma}\label{lem2} Let $p \ge 1$ and let $\varphi$ satisfy \eqref{cond-varphi-0}-\eqref{cond-varphi-3}. Let $a< b$ and let  $u$ be an  affine function on $(a, b)$. There exists a family $(u_\delta) \subset L^p(a, b)$ such that $u_\delta \to u$ in $L^p(a, b)$,  as $\delta \to 0$, 
\begin{equation*}
\limsup_{\delta \to 0} \Lambda_{\delta} (u_\delta, (a, b)) \le \C \int_a^b |u'|^p \, dx, 
\end{equation*}
and, for small $\delta$, 
\begin{equation*}
u_{\delta} = u  \mbox{ on } (a, a + \delta^{1/2}/6) \cup (b - \delta^{1/2}/ 6, b). 
\end{equation*}
\end{lemma}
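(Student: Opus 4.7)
If $u$ is constant, take $u_\delta \equiv u$: then $\varphi(0)=0$ forces $\Lambda_\delta(u_\delta,(a,b))=0=\C\int_a^b|u'|^p\,dx$, and the remaining requirements are immediate. Henceforth write $u(x)=Ax+B$ with $A\neq 0$; since $\Lambda_\delta$ depends on $u$ only through $|u(x)-u(y)|$ and $|u'|^p=|(-u)'|^p$, a proof for $-u$ yields one for $u$ by flipping the sign of $u_\delta$, so we may assume $A>0$.

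The strategy is to mimic the periodic construction of \Cref{lem1}, placing many small scaled and translated copies of the functions $\hat u_k$ built in the proof of \Cref{lem1} on the interior of $(a,b)$, and setting $u_\delta=u$ on two outer strips of width $\eta:=\delta^{1/2}/6$. Using an auxiliary sequence $(\tau_k)$ with $\tau_k\le \delta_k c_k$, as in the second half of the proof of \Cref{lem1}, associate to each small $\delta$ an index $k=k(\delta)\to\infty$ via $\tau_{k(\delta)+1}<\delta\le\tau_{k(\delta)}$, and an integer $m=m(\delta)\to\infty$ such that, with $\ell:=(b-a-2\eta)/m$, the rescaled parameter $\mu:=\delta/(A\ell)$ satisfies $\mu/\delta_k\to 1$. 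Partition $(a+\eta,b-\eta)$ into $m$ equal cells $K_j:=(a+\eta+j\ell,a+\eta+(j+1)\ell)$, $0\le j\le m-1$, and define
\begin{equation*}
u_\delta(x):=\begin{cases} u(x), & x\in (a,a+\eta)\cup (b-\eta,b),\\
u(a+\eta+j\ell)+A\ell\,\hat u_k\!\left(\dfrac{x-a-\eta-j\ell}{\ell}\right), & x\in K_j.\end{cases}
\end{equation*}
The properties $\hat u_k(0)=0$, $\hat u_k(1)=1$, and $\hat u_k\equiv U$ on neighborhoods of $0$ and $1$ in $(0,1)$ guarantee that $u_\delta$ is continuous on $(a,b)$, agrees with $u$ at every junction $a+\eta+j\ell$, and in particular equals $u$ on a neighborhood of $\{a,b\}$ strictly larger than $(a,a+\eta)\cup(b-\eta,b)$, which gives the required boundary property.

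The $L^p$-convergence $u_\delta\to u$ follows from the identity $u_\delta(x)-u(x)=A\ell(\hat u_k(s)-s)$ on each $K_j$ (with $s=(x-a-\eta-j\ell)/\ell$): this gives $\|u_\delta-u\|_{L^p(a,b)}^p\le A^p(b-a)\ell^p\|\hat u_k-U\|_{L^p(0,1)}^p\to 0$ by \eqref{lem1-p2} and the explicit form of $\hat u_k$. For the energy bound, decompose $\Lambda_\delta(u_\delta,(a,b))$ into (i) within-cell pieces, (ii) outer-strip pieces, (iii) short-range cross-cell and cell-to-strip pieces, and (iv) long-range pieces. For (i), the same scaling computation as in \eqref{lem1-est1*} gives $\Lambda_\delta(u_\delta,K_j)=A^p\ell\,\Lambda_\mu(\hat u_k,(0,1))$; combined with $\mu/\delta_k\to 1$ and \eqref{lem1-e}, this sums to at most $A^p(b-a-2\eta)(\C+Cc_k)$, whose limsup is $\C A^p(b-a)=\C\int_a^b|u'|^p\,dx$. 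For (ii), splitting $\{|x-y|\le \delta/A\}$ via \eqref{cond-varphi-0} and its complement via \eqref{cond-varphi-1}, exactly as in the treatment of term $I$ in the proof of \Cref{lem1}, bounds each outer-strip contribution by $O(A^p\eta)=o(1)$. For (iii), since $u_\delta=u$ on a neighborhood of every interface, the short-range cross interactions coincide with those for the affine function $u$ and are handled as in (ii). For (iv), the bound $\varphi\le\beta$ yields a total contribution $\le C\delta^p/(c_k\ell)^p\to 0$, analogously to the treatment of term $VI$ in the proof of \Cref{lem1}.

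The main obstacle is rate-matching: tuning $k(\delta)$ and $m(\delta)$ so that the scaling produces $\mu\approx\delta_k$ and, simultaneously, all four error terms vanish in the limit. This is handled exactly as in the passage from the sequence $(\hat u_k)$ to the family $(v_\delta)$ in the second half of the proof of \Cref{lem1}, via the auxiliary decreasing sequence $(\tau_k)$ with $\tau_k\le\delta_k c_k$.
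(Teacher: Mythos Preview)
Your construction differs from the paper's and, as written, has a genuine gap in the treatment of the within-cell contribution (i). You correctly obtain $\Lambda_\delta(u_\delta,K_j)=A^p\ell\,\Lambda_\mu(\hat u_k,(0,1))$ with $\mu=\delta/(A\ell)=m\delta/(A(b-a-2\eta))$, but you then invoke ``$\mu/\delta_k\to 1$ and \eqref{lem1-e}''. Estimate \eqref{lem1-e} is $\Lambda_{\delta_k}(\hat u_k,(0,1))\le \C+Cc_k$, valid \emph{only} at the specific parameter $\delta_k$. With your coupled horizontal/vertical scaling (both by $\ell$), $\mu$ is never exactly $\delta_k$, and the ratio $\mu/\delta_k\to 1$ alone does not let you conclude anything about $\Lambda_\mu(\hat u_k,(0,1))$: there is no continuity of $\mu\mapsto\Lambda_\mu(v,(0,1))$ available (recall $\varphi$ may be discontinuous, and on the middle piece $\hat u_k=u_k$ is merely $L^p$, so no Lipschitz bound rescues you). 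In the second half of the proof of \Cref{lem1} the paper confronts exactly this issue and resolves it by \emph{decoupling} the two scalings, $v_\delta(x)=\tfrac{1}{\hat m}\hat v_\delta(mx)$ with $\hat m=\delta_k/\delta$ real and $m=[\hat m]$ an integer, so that the inner parameter is \emph{exactly} $\delta_k$; the price is that $v_\delta$ no longer equals $U$ near the endpoints. If you decouple in the same way here your $u_\delta$ will fail to match $u$ at the interface with the outer strips, so a further boundary modification is needed --- and that is precisely the content of the paper's \Cref{lem2}.

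The paper's proof is shorter because it uses the \emph{conclusion} of \Cref{lem1} rather than its internal objects $\hat u_k$. A change of variables transports the family $(v_\delta)$ of \Cref{lem1} from $(0,1)$ to $(a,b)$, yielding $(v_\delta)\subset L^p(a,b)$ with $v_\delta\to u$ and $\lim_{\delta\to 0}\Lambda_\delta(v_\delta,(a,b))=\C\int_a^b|u'|^p$; since \Cref{lem1} already delivers a family indexed by \emph{all} $\delta$, no rate-matching is required. The paper then repeats the good-point selection and surgery from the first half of the proof of \Cref{lem1}: one picks $c_\delta\ge\delta^{1/2}$, points $x_{1,\delta}\in(a+c_\delta/2,a+c_\delta)$ and $x_{2,\delta}\in(b-c_\delta,b-c_\delta/2)$ at which $v_\delta$ is close to $u$ and the one-point energy integrals are bounded, and defines $\hat v_\delta$ equal to $u$ on the outer thirds of the corridors, constant on the inner thirds, affine on the bridges, and equal to $v_\delta$ in the middle. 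The six-term estimate \eqref{lem1-decomp}--\eqref{lem1-e5} then gives $\limsup_\delta\Lambda_\delta(\hat v_\delta,(a,b))\le\limsup_\delta\Lambda_\delta(v_\delta,(a,b))$, and $\hat v_\delta=u$ on intervals of length $\ge c_\delta/6\ge\delta^{1/2}/6$ at each end.
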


\begin{proof} By Lemma~\ref{lem1}, after a change of variables, there exists a family $(v_\delta) \subset L^p(a,b)$ converging to $u$ in $L^p(a, b)$ as $\delta \to 0$, and 
\begin{equation}\label{lem2-p1}
\lim_{\delta \to 0} \Lambda_{\delta} (v_\delta, (a, b)) = \C \int_a^b |u'|^p \, dx. 
\end{equation}
As in the proof of Lemma~\ref{lem1}, there exist $(c_\delta) $, $(x_{1, \delta})$, $(x_{2, \delta})$ such that 
\begin{equation*}
\lim_{\delta \to 0} c_\delta = 0, \quad c_\delta \ge \delta^{1/2}, 
\end{equation*}
\begin{equation*}
\Lambda_{\delta}(v_\delta, (a, b)) \le \C \int_a^b |u'|^p \, dx + c_{\delta}, 
\end{equation*}
\begin{equation*}
x_{1, \delta} \in (a + c_\delta/ 2, a + c_\delta ), \quad x_{2, \delta} \in (b - c_\delta, b - c_\delta/ 2), 
\end{equation*}
\begin{equation*}
|v_{\delta}(x_{1, \delta}) - u(x_{1 \delta})| \le c_{\delta}, \quad |v_{\delta}(x_{2, \delta}) - u(x_{2, \delta})| \le c_{\delta}, 
\end{equation*}
\begin{equation*}
\int_a^b \frac{\varphi_{\delta}(|v_\delta(x_{1,\delta}) - v_\delta(y)|)}{|x_{1, \delta} - y|^{p+1}}  dy  \le C, \quad  \int_a^b \frac{\varphi_{\delta}(|v_\delta(x_{2,\delta}) - v_\delta(y)|)}{|x_{2, \delta} - y|^{p+1}}  dy  \le C, 
\end{equation*} 
for small $\delta$ and  for some positive constant $C$ independent of $\delta$. Here we used the fact, for $c \in (0, (b-a)/2)$, 
$$
\liminf_{\delta \to 0} \Lambda_{\delta} (v_\delta, (a+c, b-c)) \ge \C \int_{a+c}^{b-c} |u'|^p \, dx.
$$

Define $\hat v_\delta: (a, b)  \to \mR$ as follows
\begin{equation*}
\hat v_\delta (x) : = \left\{\begin{array}{cl}
u(x)& \mbox{ if } a <   x <  \frac{2 a + x_{1, \delta}}{3}, \\[6pt]
v_\delta(x_{1, \delta}) & \mbox{ if } \frac{a + 2x_{1, \delta}}{3} <   x < x_{1, \delta}, \\[6pt]
v_\delta(x) & \mbox{ if } x_{1, \delta} \le x \le x_{2, \delta}, \\[6pt]
v_\delta(x_{2, \delta}) & \mbox{ if } x_{2, \delta} < x <  \frac{b+2x_{2, \delta}}{3}, \\[6pt]
u(x) & \mbox{ if } \frac{2b+x_{2, \delta}}{3} < x <  b, 
\end{array}\right.
\end{equation*}
and $\hat v_\delta$ is chosen in $[\frac{2 a + x_{1,\delta}}{3}, \frac{a + 2 x_{1, \delta}}{3}] \cup [ \frac{b+2x_{2, \delta}}{3}, \frac{2 b+x_{2, \delta}}{3}]$ in such a way that it is affine there and $\hat u_\delta$ is  continuous at the end points. It is clear that $\hat v_\delta \to u$ in $L^p(a, b)$.

As in  the proof of \eqref{lem1-e-***}, we have 
\begin{equation*}
\limsup_{\delta \to 0} \Lambda_{\delta}(\hat v_\delta, (a, b)) \le \limsup_{\delta \to 0} \Lambda_{\delta}(v_\delta, (a, b)).
\end{equation*}
By \eqref{lem2-p1}, the conclusion now holds for $(u_\delta)$  with $u_\delta : = \hat v_\delta$. 
\end{proof}

Using Lemma~\ref{lem2}, we can establish the following key ingredient in the proof of (G2). 

\begin{lemma}\label{lemaffine1} Let $p \ge 1$ and let $\varphi$ satisfy \eqref{cond-varphi-0}-\eqref{cond-varphi-3}. 
Let $u$ be a continuous piecewise linear function defined on $\mR$ with compact
support. There exists a family $(u_\delta) \subset L^p(\mR)$ such that $u_\delta \to u$  in $L^p(\mR)$, as $\delta \to 0$,  and
\begin{equation*}
\limsup_{\delta \to 0} \Lambda_\delta(u_\delta, \mR) \le \C \int_{\mR}
|u'|^p \, dx.
\end{equation*} 
\end{lemma}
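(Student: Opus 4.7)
The plan is to reduce Lemma~\ref{lemaffine1} to Lemma~\ref{lem2} by a partition-and-gluing argument. Let $a_1 < \cdots < a_{N-1}$ be the kinks of $u$, and pick $a_0 < a_1$ and $a_{N-1} < a_N$ so that $u \equiv 0$ on $(-\infty, a_0] \cup [a_N, +\infty)$. Set $J_i := (a_i, a_{i+1})$; on each $J_i$, $u|_{J_i}$ is affine (possibly identically zero). On each $J_i$ apply Lemma~\ref{lem2} to obtain a family $u_\delta^{(i)}$ such that $u_\delta^{(i)} \to u|_{J_i}$ in $L^p(J_i)$, $\limsup_{\delta \to 0} \Lambda_\delta(u_\delta^{(i)}, J_i) \le \C \int_{J_i} |u'|^p \, dx$, and, crucially, $u_\delta^{(i)} = u$ on the strip $(a_i, a_i + \delta^{1/2}/6) \cup (a_{i+1} - \delta^{1/2}/6, a_{i+1})$. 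Define $u_\delta$ on $\mR$ by $u_\delta := u_\delta^{(i)}$ on $J_i$ and $u_\delta(a_i) := u(a_i)$; the boundary coincidence makes $u_\delta$ continuous at each $a_i$, and $L^p(\mR)$-convergence follows by summing over the finitely many $J_i$ together with $u_\delta = 0 = u$ outside $[a_0, a_N]$.

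Decompose the energy as
\begin{equation*}
\Lambda_\delta(u_\delta, \mR) = \sum_{i=0}^{N-1} \Lambda_\delta(u_\delta^{(i)}, J_i) + 2 \sum_{0 \le i < j \le N-1} \int_{J_i} \int_{J_j} \frac{\varphi_\delta(|u_\delta(x) - u_\delta(y)|)}{|x-y|^{p+1}} \, dy \, dx.
\end{equation*}
By Lemma~\ref{lem2}, the $\limsup$ of the diagonal sum is at most $\C \int_\mR |u'|^p \, dx$, which is the desired bound. It therefore suffices to prove that each cross term with $i < j$ is $o(1)$ as $\delta \to 0$. For $\delta$ so small that $\delta^{1/2}/6 < \min_i(a_{i+2} - a_{i+1})$, only adjacent pairs $j = i+1$ can contribute at distances $|x - y| \le \delta^{1/2}/6$; all others, and all non-adjacent pairs, lie in the regime $|x - y| > \delta^{1/2}/6$.

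On the far region $|x - y| > \delta^{1/2}/6$, use \eqref{cond-varphi-1} to get $\varphi_\delta \le \beta \delta^p$ and bound the cross term by
\begin{equation*}
\beta \delta^p \int_{[a_0, a_N]} \, dx \int_{|y - x| > \delta^{1/2}/6} \frac{1}{|x - y|^{p+1}} \, dy \le C \delta^p \cdot \delta^{-p/2} = O(\delta^{p/2}).
\end{equation*}
On the near region $|x - y| \le \delta^{1/2}/6$ with $j = i+1$, both $x$ and $y$ lie in strips where $u_\delta$ coincides with $u$, so $|u_\delta(x) - u_\delta(y)| = |u(x) - u(y)| \le L|x-y|$ with $L := \|u'\|_{L^\infty(\mR)}$. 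Splitting further at $|x-y| = \delta/L$ and mimicking the estimate for term $I$ in the proof of Lemma~\ref{lem1} (using \eqref{cond-varphi-0} on the inner part where $|u_\delta(x) - u_\delta(y)| \le \delta$, and \eqref{cond-varphi-1} on the outer part) yields a contribution of $O(\delta)$ when $p > 1$ and $O(\delta |\log \delta|)$ when $p = 1$, both $o(1)$.

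The main obstacle is this cross-term estimate near the breakpoints: without the monotonicity \eqref{cond-varphi-mono} one cannot substitute $\varphi_\delta(|u_\delta(x) - u_\delta(y)|) \le \varphi_\delta(L|x-y|)$ directly. The device built into Lemma~\ref{lem2}, which forces $u_\delta$ to agree with the Lipschitz function $u$ on $\delta^{1/2}/6$-strips around each breakpoint, is precisely what reduces the interaction between neighbouring affine pieces to an integral of a Lipschitz function, controllable by the two-sided bounds \eqref{cond-varphi-0}-\eqref{cond-varphi-1} alone.
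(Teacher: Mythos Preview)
Your argument is correct and follows essentially the same route as the paper: partition $\mR$ at the breakpoints of $u$, invoke Lemma~\ref{lem2} on each affine piece so that $u_\delta$ agrees with $u$ on $\delta^{1/2}/6$-strips at the endpoints, glue, and then show the cross terms are $o(1)$ by splitting into a far region (controlled by \eqref{cond-varphi-1}) and a near region where the boundary agreement reduces matters to a Lipschitz estimate (controlled by \eqref{cond-varphi-0}-\eqref{cond-varphi-1}). The paper compresses your cross-term analysis into a single line, referring back to the analogous estimate \eqref{lem1-est2} in Lemma~\ref{lem1}, and it sets the two outer intervals to be the unbounded rays $(-\infty,a_1)$ and $(a_m,+\infty)$ rather than introducing auxiliary finite nodes $a_0,a_N$; with your choice, your displayed decomposition omits the interactions between $(-\infty,a_0)\cup(a_N,+\infty)$ and the interior, but since $u_\delta\equiv 0$ there these extra terms are either zero or fall under your far-region bound, so the omission is harmless.
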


\begin{proof} Since $u$ is a  continuous piecewise linear
function defined on $\mR$ with compact support, there exist $a_1 < a_2 < \dots <
a_m$ such that $u$ is affine on $(a_i, a_{i+1})$, $1 \le i < m-1$,
$u(x) = 0 $ if $x< a_1$ or $x > a_m$, and $u$ is continuous at $a_i$ for $1 \le i \le m$. In what follows, we denote $a_0 = - \infty$ and $a_{m+1}  = + \infty$. 

For $1 \le i \le m-1$, by Lemma~\ref{lem2},  there exist 
a family $(v_{i, \delta}) \subset L^p(a_i, a_{i+1})$ such that 
\begin{equation}\label{lem3-vi-1-1}
v_{i, \delta} \to u \mbox{ in } L^{p}(a_{i}, a_{i+1}) \mbox{ as } \delta \to 0, 
\end{equation}
\begin{equation}\label{lem3-vi-A-1}
\limsup_{\delta \to 0} \Lambda_{\delta} ( v_{i, \delta}, (a_i, a_{i+1}) )  \le \C \int_{a_i}^{a_{i+1}} |u'|^p \, dx, 
\end{equation}
and, for small $\delta$, 
\begin{equation}\label{lem3-vi-2}
v_{i, \delta}  =  u  \mbox{ on } (a_i, a_i + \delta^{1/2}/6) \cup (a_{i+1} -  \delta^{1/2}/6, a_{i+1}). 
\end{equation}
Set
\begin{equation}\label{lem3-vi-1-2}
v_{0, \delta} = 0 \mbox{ in } (a_0, a_1) \quad { and  } \quad v_{m, \delta} = 0 \mbox{ in } (a_m, a_{m+1}). 
\end{equation}
Then 
\begin{equation}\label{lem3-vi-A-2}
 \Lambda_{\delta} ( v_{0, \delta}, (a_0, a_{1}) )   =   \Lambda_{\delta} ( v_{m, \delta}, (a_m, a_{m+1}) )  = 0. 
\end{equation}
Define $u_\delta : \mR \to \mR$ as follows 
\begin{equation}\label{lem3-def-u}
u_\delta (x) = v_{i, \delta} (x) \mbox{ for } x \in  (a_i, a_{i+1}) \mbox{ and }  0 \le i \le m.  
\end{equation}
As in \eqref{lem1-est2}, we have
\begin{equation}\label{lem3-vi-***}
\Lambda_{\delta}(u_\delta, \mR)  \le  \sum_{i=1}^{m-1}  \Lambda_{\delta} (u_\delta, (a_i, a_{i+1})) +  C m \delta^{1/2},  
\end{equation}
for some positive constant $C$ independent of $\delta$ (but $C$ depends on the slope of $u$ on each interval $(a_i, a_{i+1})$ for small $\delta$).  It follows from \eqref{lem3-vi-A-1}, \eqref{lem3-vi-A-2}, and \eqref{lem3-vi-***} that 
\begin{equation}\label{lem3-est-u-1}
\limsup_{\delta \to 0} \Lambda_{\delta}(u_\delta, \mR)  \le   \C \int_{\mR} |u'|^p \, dx. 
\end{equation}
From \eqref{lem3-vi-1-1} and \eqref{lem3-vi-1-2}, we have 
\begin{equation}\label{lem3-est-u-2}
u_\delta \to u \mbox{ in } L^p(\mR) \mbox{ as } \delta \to 0. 
\end{equation}
The conclusion now follows from \eqref{lem3-est-u-1} and \eqref{lem3-est-u-2}. 
\end{proof}

We are ready to complete the 

\begin{proof}[Proof of Property (G2)] We distinguish two cases. 

{\bf Case 1}: $\C > 0$. In this case, for any function $g \in W^{1, p}(\mR)$ with $p>1$ (resp. $g \in BV(\mR)$ with $p=1$), we  will construct a family $(g_\delta) \subset L^p(\mR)$ such that $g_\delta \to g$ in $L^p(\mR)$, as $\delta \to 0$, and 
$$
\limsup_{\delta \to 0} \Lambda_{\delta} (g_\delta, \mR) \le \C \int_{\mR} |g'|^p \, dx. 
$$ 

{\bf Case 2}: $\C = 0$. In this case, for any function $g \in L^p(\mR)$ with $p \ge 1$, 
we will  construct a family $(g_\delta) \subset L^p(\mR)$ such that $g_\delta \to g$ in $L^p(\mR)$, as $\delta \to 0$, and 
$$
\lim_{\delta \to 0} \Lambda_{\delta} (g_\delta, \mR) = 0. 
$$ 

\noindent {\bf Proof in Case 1}:  Let $(g_n) \subset L^p(\mR)$ be a sequence of continuous piecewise linear functions with compact support such that $g_n \to g$ in $L^p(\mR)$, as $n \to + \infty$, and 
\begin{equation*}
\lim_{n \to + \infty} \int_{\mR} |g_n \sp{\prime} |^p \, dx = \int_{\mR} |g'|^p \, dx. 
\end{equation*}
For each $n \in \mN$, by Lemma~\ref{lemaffine1}, there exists a family 
$(g_{n, \delta}) \subset L^p(\mR)$ such that $g_{n, \delta} \to g_n$ in $L^p(\mR)$, as $\delta \to 0$,  and 
\begin{equation*}
\limsup_{\delta \to 0 } \Lambda_{\delta}(g_{n, \delta}, \mR) \le \C \int_{\mR} |g_n \sp{\prime}|^p \, dx. 
\end{equation*}
The conclusion now follows from a standard selection process. 

\medskip 

\noindent  {\bf Proof in Case 2}:  Let $(g_n) \subset L^p(\mR)$ be a sequence of continuous piecewise linear functions with compact support such that $g_n \to g$ in $L^p(\mR)$ as $n \to \infty$. 
For each $n \in \mN$, by Lemma~\ref{lemaffine1}, there exists a family 
$(g_{n, \delta}) \subset L^p(\mR)$ such that $g_{n, \delta} \to g_n$ in $L^p(\mR)$, as $\delta \to 0$,  and 
\begin{equation*}
\lim_{\delta \to 0 } \Lambda_{\delta}(g_{n, \delta}, \mR) = 0. 
\end{equation*}
The conclusion now follows from a standard selection process. 
\end{proof}

\subsection{Proof of Property (G1)}

This section containing two subsections is devoted to the proof of Property (G1). In the first subsection, we consider the case $p=1$. The case $p>1$ is studied in the second subsection. 

\subsubsection{Proof of Property (G1) for $p=1$} \label{sect-G1-1}

In this section, we consider   $p=1$ and assume $\C > 0$ since there is nothing to prove otherwise.  
Define
\begin{equation}\label{defb1}
\gamma := \inf \liminf_{\delta \to 0} \Lambda_{\delta} (v_\delta, (0, 1)), 
\end{equation}
where the infimum is taken over all families
$(v_\delta) \subset L^1(0, 1)$ such that $v_\delta  \to H_{1/2}$ in $L^1(0, 1)$ as $\delta \to 0$. Here and
in what follows $H_c(x)  := H(x- c)$ for any $c \in \mR$, where
$H$ is the function defined on $\mR$ by
\begin{equation*}
H(x) : = \left\{\begin{array}{ll} 0 & \mbox{if } x < 0, \\[6pt]
1 & \mbox{otherwise}.
\end{array}\right.
\end{equation*}

There are two key ingredients. 
\begin{lemma}\label{lem-G1} Let $p = 1$ and let $\varphi$ satisfy \eqref{cond-varphi-0}-\eqref{cond-varphi-3}. We have
$$
\gamma = \C, 
$$
where $\C$ is the constant defined in \eqref{def-k}. 
\end{lemma}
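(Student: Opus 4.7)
My plan is to prove the lemma by establishing the two inequalities $\gamma \le \C$ and $\C \le \gamma$ separately, via explicit constructions. Morally, $U$ and $H_{1/2}$ both have total variation $1$ on $(0,1)$, so (for $p=1$) they should yield the same relaxed value, namely $\C$.

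For $\gamma \le \C$, I would approximate $H_{1/2}$ in $L^1(0,1)$ by the continuous piecewise linear function $h_\eta$ that equals $0$ on $(0, 1/2-\eta)$, is affine from $0$ to $1$ on $(1/2-\eta, 1/2+\eta)$, and equals $1$ on $(1/2+\eta, 1)$. Applying Lemma~\ref{lem2} to the affine piece of slope $1/(2\eta)$ yields a family $(u_{\eta,\delta})$ on $(1/2-\eta, 1/2+\eta)$ converging to $h_\eta$ there with $\limsup_\delta \Lambda_\delta(u_{\eta,\delta}, (1/2-\eta,1/2+\eta)) \le \C \cdot (2\eta)^{1-p} = \C$ (since $p=1$), and with $u_{\eta,\delta}$ agreeing with $h_\eta$ near the endpoints; extending by the constants $0$ and $1$ outside $(1/2-\eta,1/2+\eta)$ gives $\tilde u_{\eta,\delta} \to h_\eta$ in $L^1(0,1)$. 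The additional cross-terms between the transition and constant regions are controlled exactly as in the estimates $II$--$VI$ in the proof of Lemma~\ref{lem1} and vanish as $\delta \to 0$. A standard diagonal selection $\eta = \eta(\delta) \to 0$ then produces a family converging to $H_{1/2}$ in $L^1(0,1)$ with $\liminf \Lambda_\delta \le \C$, yielding $\gamma \le \C$.

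The inequality $\C \le \gamma$ is the main content and relies on a \emph{staircase} construction. Given any family $(v_\delta)$ with $v_\delta \to H_{1/2}$ in $L^1(0,1)$, I pass to a subsequence $(v_{\delta_k})$ realizing $L := \liminf_{\delta \to 0} \Lambda_\delta(v_\delta, (0,1))$ and then apply the boundary-surgery of Lemma~\ref{lem1} to obtain $(\hat v_{\delta_k})$ with $\hat v_{\delta_k} \equiv 0$ on $(0, c_k)$ and $\hat v_{\delta_k} \equiv 1$ on $(1-c_k, 1)$ for some $c_k \to 0$ with $c_k \ge \delta_k^{1/2}$, preserving both the $L^1$-convergence and the limit $L$. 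Choosing integers $N_k \to \infty$ (e.g.\ $N_k := \lfloor 1/c_k \rfloor$) and setting $\eta_k := \delta_k/N_k$, I define on $(0,1)$
\begin{equation*}
w_k(x) := \frac{i}{N_k} + \frac{1}{N_k}\,\hat v_{\delta_k}\bigl(N_k(x - i/N_k)\bigr), \quad x \in \bigl(\tfrac{i}{N_k}, \tfrac{i+1}{N_k}\bigr), \quad 0 \le i \le N_k - 1.
\end{equation*}
The flat boundary regions of $\hat v_{\delta_k}$ make $w_k$ continuous across each breakpoint, and a change of variable gives $\|w_k - U\|_{L^1(0,1)} = O(1/N_k) \to 0$.

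The crucial scaling identity $\varphi_{\eta_k}(t/N_k) = N_k^{-p}\varphi_{\delta_k}(t)$, which follows from $\eta_k N_k = \delta_k$, combined with the change of variable $x' = N_k(x - i/N_k)$, yields
\begin{equation*}
\int_{I_i}\int_{I_i} \frac{\varphi_{\eta_k}(|w_k(x)-w_k(y)|)}{|x-y|^{p+1}}\,dx\,dy = \frac{1}{N_k}\,\Lambda_{\delta_k}(\hat v_{\delta_k},(0,1)),
\end{equation*}
so the diagonal contributions sum to exactly $\Lambda_{\delta_k}(\hat v_{\delta_k}, (0,1)) \to L$. For non-adjacent pairs $|i-j|\ge 2$, the uniform bound $\varphi_{\eta_k} \le \beta \eta_k^p$ together with $|x-y| \ge (|i-j|-1)/N_k$ yields a total off-diagonal contribution of order $O(\eta_k^p N_k^p) = O(\delta_k^p) \to 0$. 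For adjacent pairs, the flat segments of $\hat v_{\delta_k}$ force the integrand to vanish within distance $c_k/N_k$ of the common breakpoint; a logarithmic estimate away from there produces a bound of order $O(\delta_k \log(1/c_k)) = O(\delta_k \log(1/\delta_k)) \to 0$. Combining, $\limsup_k \Lambda_{\eta_k}(w_k,(0,1)) \le L$, and Remark~\ref{rem-k} applied to $(w_k) \to U$ in $L^1(0,1)$ gives $\C \le L$, i.e.\ $\C \le \gamma$. The main obstacle is this off-diagonal control---particularly the adjacent-pair contribution, where a naive bound fails due to the logarithmic singularity of $\int\!\!\int|x-y|^{-2}$ near the diagonal---which forces both the boundary surgery on $\hat v_{\delta_k}$ and a delicate matching between the scales $N_k$, $c_k$, $\delta_k$, $\eta_k$.
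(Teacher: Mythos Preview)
Your proposal is correct and follows essentially the same strategy as the paper. For $\gamma \le \C$ the paper simply invokes the already-established Property~(G2) with $g = H_{1/2}$ (which amounts to your direct construction via Lemma~\ref{lem2}), and for $\C \le \gamma$ it uses the same boundary-surgery-plus-staircase idea, packaged as Lemmas~\ref{lem-C1} and~\ref{lem-C2}; the only technical variation is that the paper arranges the flat boundary regions of the modified $H_{1/2}$-approximants to have \emph{fixed} length $\ge 1/16$ (via a pigeonhole choice of $b_k \in [1/8,1/4]$), so that the adjacent-pair cross terms vanish in a fixed neighborhood of each breakpoint and your scale-matching between $N_k$, $c_k$, and $\delta_k$ is not needed.
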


\begin{lemma}\label{lem-C4} Let $p = 1$ and let $\varphi$ satisfy \eqref{cond-varphi-0}-\eqref{cond-varphi-3}.
Let $u \in L^1( a, b)$ and let $a <  t_1 < t_{2} <  b$ be two Lebesgue points of $u$. Let $(u_\delta)  \subset L^1(t_1, t_2)$ 
such that $u_\delta \to u$ in $L^{1}(t_1, t_2)$. We have
\begin{equation}\label{lem-C4-state}
\liminf_{\delta \to 0} \Lambda_{\delta}(u_{\delta}, (t_1, t_2)) 
\ge \gamma |u(t_2) - u(t_1)|.
\end{equation}
\end{lemma}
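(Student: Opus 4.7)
We can assume $J := u(t_2) - u(t_1) > 0$; otherwise the inequality is trivial, or we replace $u_\delta$ by $-u_\delta$. For $p=1$ the functional $\Lambda_\delta$ enjoys two useful scaling properties: (i) affine invariance in the spatial variable (the factors from $|x-y|^{-2}$ and $dx\,dy$ rescale identically), and (ii) $\Lambda_\delta(cu, I) = c\,\Lambda_{\delta/c}(u, I)$ for $c > 0$. Applying (i) to the affine bijection $(0,1) \to (t_1, t_2)$, subtracting the constant $u(t_1)$ (which leaves $\Lambda_\delta$ unchanged), and applying (ii) with $c = J$, the lemma reduces to the following assertion: \textit{for every family $(w_\delta) \subset L^1(0,1)$ with $w_\delta \to w$ in $L^1(0,1)$ and $w(0) = 0$, $w(1) = 1$ at the Lebesgue points $0$ and $1$, we have $\liminf_{\delta \to 0} \Lambda_\delta(w_\delta, (0,1)) \ge \gamma$.}

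Assuming this liminf is finite, the strategy is to construct a competitor family $(\tilde w_\delta) \subset L^1(0,1)$ with $\tilde w_\delta \to H_{1/2}$ in $L^1(0,1)$ satisfying $\limsup_{\delta \to 0} \Lambda_\delta(\tilde w_\delta, (0,1)) \le \liminf_{\delta \to 0} \Lambda_\delta(w_\delta, (0,1))$; the definition \eqref{defb1} of $\gamma$ then concludes. The construction proceeds in two stages, both modeled on the proof of \Cref{lem1}. Fix $c_\delta \to 0$ slowly. Using the Lebesgue point property $\int_0^{c_\delta} |w|\,ds = o(c_\delta)$, the $L^1$ convergence of $w_\delta$, and the Fubini bound
\[
\int_{c_\delta/2}^{c_\delta} \int_0^1 \frac{\varphi_\delta(|w_\delta(a) - w_\delta(y)|)}{|a-y|^2} \, dy\,da \le \Lambda_\delta(w_\delta, (0,1)),
\]
select $a_\delta \in (c_\delta/2, c_\delta)$ with $|w_\delta(a_\delta)| \to 0$ and a quantitative bound on the one-point integral at $a_\delta$, and analogously $b_\delta \in (1-c_\delta, 1-c_\delta/2)$ with $|1 - w_\delta(b_\delta)| \to 0$. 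In the first stage, modify $w_\delta$ to $\hat w_\delta$ equal to $0$ on $(0, a_\delta/3)$ and to $1$ on $((1+2b_\delta)/3, 1)$, connected through affine pieces exactly as $\hat u_k$ is constructed in \Cref{lem1}; the six-piece decomposition mimicking \eqref{lem1-decomp}--\eqref{lem1-e5} yields $\Lambda_\delta(\hat w_\delta, (0,1)) \le \Lambda_\delta(w_\delta, (0,1)) + o(1)$. In the second stage, compress $\hat w_\delta$ into the thin strip $(1/2-c_\delta, 1/2+c_\delta)$: set $\tilde w_\delta = 0$ on $(0, 1/2-c_\delta)$, $\tilde w_\delta = 1$ on $(1/2+c_\delta, 1)$, and $\tilde w_\delta(s) = \hat w_\delta(\psi_\delta(s))$ on the middle strip, where $\psi_\delta$ is the affine bijection onto $(0,1)$. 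Since $\hat w_\delta(0) = 0$ and $\hat w_\delta(1) = 1$ exactly, $\tilde w_\delta$ is continuous across the strip interfaces. The $L^1$ convergence $\tilde w_\delta \to H_{1/2}$ is immediate (the middle strip has vanishing measure), while the affine invariance (i) makes the middle self-interaction equal to $\Lambda_\delta(\hat w_\delta, (0,1))$; the constant-strip self-interactions vanish, and the remaining cross interactions are $o(1)$ by estimates analogous to \eqref{lem1-e5}, exploiting both the continuity of $\tilde w_\delta$ at the interfaces and the bounds \eqref{cond-varphi-0}, \eqref{cond-varphi-1}.

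The main obstacle is the technical interplay between (a) the choice of $c_\delta$, (b) the rate at which $|w_\delta(a_\delta)| \to 0$ (the Lebesgue point property gives no sharp quantitative rate), and (c) the one-point integral bound, which via Fubini is only $O(1/c_\delta)$ rather than the $O(1)$ available in the setting of \Cref{lem1}, where the tighter estimate $\int_{c_k/2}^{c_k}\int \varphi/|\cdot|^2 \le C c_k$ is a consequence of the special structure $u_k \to U$. A careful coupling of these parameters — together with the $t^{p+1}$ behavior of $\varphi$ near $0$ from \eqref{cond-varphi-0}, which allows each cross integrand to be controlled by a quadratic quantity near the junctions where $\tilde w_\delta$ is continuous with a small slope — is required to make every error term vanish as $\delta \to 0$.
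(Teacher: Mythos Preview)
Your overall strategy---build from $(w_\delta)$ a competitor family $(\tilde w_\delta)$ converging to $H_{1/2}$ in $L^1(0,1)$ with energy no larger than $\Lambda_\delta(w_\delta,(0,1))+o(1)$, then invoke the definition of $\gamma$---is sound in principle, and it is genuinely different from the paper's route. But Stage~1 as you describe it has a real gap, precisely the one you flag in your last paragraph and do not close.

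The issue is term~II in the six-piece decomposition. In the proof of \Cref{lem1} one has the crucial estimate $\int_{c_k/2}^{c_k}\int_0^1 \varphi_{\delta_k}(\,\cdot\,)/|x-y|^2\,dx\,dy\le Cc_k$, which yields a one-point bound at $x_{1,k}$ of order $O(1)$; this in turn makes $II\le (x_{1,k}/3)\cdot O(1)\le Cc_k\to 0$. That strip estimate comes from comparing $\Lambda_{\delta_k}(u_k,(0,1))\le\kappa+c_k$ against the lower bound $\Lambda_{\delta_k}(u_k,(c_k,1-c_k))\ge\kappa(1-2c_k)-c_k$, the latter following from the definition of $\kappa$ applied to a rescaling of $u_k|_{(c_k,1-c_k)}$. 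In your setting no such lower bound on a subinterval is available---you are precisely trying to prove one---so Fubini only gives a one-point integral at $a_\delta$ of order $O(1/c_\delta)$, and then $II\le (a_\delta/3)\cdot O(1/c_\delta)=O(1)$, not $o(1)$. The suggestion that the $t^{p+1}$ behavior of $\varphi$ near $0$ rescues this does not help: for $y$ ranging over $(a_\delta,b_\delta)$ you have no smallness of $|w_\delta(a_\delta)-w_\delta(y)|$, so the integrand cannot be put into the quadratic regime.

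The paper avoids this tension by \emph{expanding} rather than compressing. It fixes a small but $\delta$-independent $c>0$, finds $x_{1,k}\in(0,c)$ with $|u_k(x_{1,k})|$ merely bounded by a small constant (not tending to $0$) and one-point integral $\le 2T/c=O(1)$, and extends by the constants $u_k(x_{1,k})$, $u_k(x_{2,k})$ to a large interval $(-n,n)$. The near-interface cross term then lives on a strip of width $\delta_k$ and is $\le \delta_k\cdot O(1)\to 0$; the far part is $O(\delta_k\ln n)+O(\delta_k|\ln\delta_k|)\to 0$. The price is that $v_k$ is not exactly $0$ or $1$ on the wings, but its normalized $L^1$-distance to $H_0$ on $(-n,n)$ is at most $\hat\delta_1^2 n/2+C$, which for $n$ large enough is below $\hat\delta_1\cdot 2n$; a quantitative form of the definition of $\gamma$ (the paper's Claim~1) then gives $\Lambda_{\delta_k}(v_k,(-n,n))\ge\gamma-\eps$. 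If you want to repair your argument, the natural fix is to drop Stage~1 entirely and instead extend $w_\delta|_{(a_\delta,b_\delta)}$ by the constants $w_\delta(a_\delta)$, $w_\delta(b_\delta)$ to a large interval, following the paper's scheme.
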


Assuming \Cref{lem-G1,lem-C4}, we give the

\begin{proof}[Proof of Property (G1) for $p=1$] Since $\gamma = \C$ by \Cref{lem-G1}, Property (G1) is now a direct consequence of \Cref{lem-C4} and the fact that for $u \in L^1(\mR)$, then 
\begin{equation}\label{G1-pro}
\int_{\mR} |u'| \, dx =  \sup \left\{  \sum_{j=1}^m |u(t_{j+1}) - u(t_j)|\right\},  
\end{equation}
 where the supremum is taken over all finite sets $\big\{t_j;  1 \le j \le m+1 \big\}$ such that $t_1 <  \cdots < t_{m+1}$ and each $t_j$ is a Lebesgue point of $u$, see,  e.g., \cite[Theorem 1 on page 217]{EGMeasure}. Indeed, we have 
 \begin{equation*}
 \liminf_{\delta \to 0} \Lambda_{\delta} (u_\delta, \mR) \ge \sum_{j=1}^{m} \liminf_{\delta \to 0} \Lambda_{\delta} (u_\delta, (t_j, t_{j+1}))  \ge \sum_{j=1}^{m} \gamma |u(t_{j+1}) - u(t_j)| \mbox{ by Lemma~\ref{lem-C4}}, 
 \end{equation*}
which implies, by \eqref{G1-pro}, 
 \begin{equation*}
 \liminf_{\delta \to 0} \Lambda_{\delta} (u_\delta, \mR) \ge  \gamma \int_{\mR} |u'| \, dx.  
 \end{equation*}
\end{proof}

The proof of \Cref{lem-G1} relies on the two lemmas below. The first one is
 
\begin{lemma}\label{lem-C1} Let $p = 1$ and let $\varphi$ satisfy \eqref{cond-varphi-0}-\eqref{cond-varphi-3}.  There exist a sequence $(h_k) \subset L^1(0, 1)$ and a sequence
$(\delta_k)\subset \mR_+$ converging to 0  such that 
$$
\lim_{k \to + \infty} h_k = H_{1/2} \mbox{ in } L^1(0, 1), 
$$
$$
h_k(x) = 0 \mbox{ for  } x  < 1/16, \quad h_k(x) = 1 \mbox{ for } x > 1-1/16, 
$$
and
\begin{equation*}
\limsup_{k \to +  \infty} \Lambda_{\delta_k} (h_k, (0, 1))
\le \gamma.
\end{equation*}
\end{lemma}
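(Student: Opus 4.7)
The plan is to start from a near-optimal family for $\gamma$ and then surgically modify it near the endpoints so that $h_k \equiv 0$ on $(0,1/16)$ and $h_k \equiv 1$ on $(15/16,1)$. Invoking the definition of $\gamma$ in \eqref{defb1}, I would first pick $\delta_k \to 0$ and $v_k \in L^1(0,1)$ with $v_k \to H_{1/2}$ in $L^1(0,1)$ and $\Lambda_{\delta_k}(v_k,(0,1)) \to \gamma$. The modification itself would follow the blueprint of the proof of Lemma~\ref{lem1}: locate good points $x_{1,k}$ just to the right of $1/16$ and $x_{2,k}$ just to the left of $15/16$; on the outside, impose the prescribed boundary value via a short affine transition followed by a short plateau at height $v_k(x_{1,k})$ (respectively $v_k(x_{2,k})$); and keep $h_k = v_k$ on the middle interval $(x_{1,k},x_{2,k})$.

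The critical new ingredient, absent from Lemma~\ref{lem1}, is a companion lower bound. Choosing $c_k \to 0$ slowly with $c_k \ge \delta_k^{1/2}$, $\|v_k - H_{1/2}\|_{L^1} \le c_k^2$, and $\Lambda_{\delta_k}(v_k,(0,1)) \le \gamma + c_k$, I would also require
\[
\Lambda_{\delta_k}\bigl(v_k,\,(1/16+c_k,\,15/16-c_k)\bigr) \ge \gamma - c_k.
\]
This follows from the definition of $\gamma$ via a \emph{symmetric} rescaling: the affine bijection from $(0,1)$ onto $(1/16+c_k,\,15/16-c_k)$ sends $1/2$ to $1/2$ because the target interval is symmetric about $1/2$, and for $p=1$ a direct change of variables shows that $\Lambda_\delta$ is invariant under this rescaling. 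Hence the pull-back of $v_k$ to $(0,1)$ still converges to $H_{1/2}$, and so has $\liminf$-energy at least $\gamma$. Subtracting from the upper bound yields the boundary-mass bound $\int_{1/16}^{1/16+c_k}\int_0^1 \varphi_{\delta_k}(|v_k(x)-v_k(y)|)/|x-y|^2\,dy\,dx \le 2c_k$, and symmetrically near $15/16$. Combined with $\int_{1/16}^{1/16+c_k}|v_k|\,dx \le c_k^2$, a Chebyshev argument on the sub-interval $(1/16+c_k/2,\,1/16+c_k)$ then produces $x_{1,k}$ there with $|v_k(x_{1,k})| \le C c_k$ and $\int_0^1 \varphi_{\delta_k}(|v_k(x_{1,k})-v_k(y)|)/|x_{1,k}-y|^2\,dy \le C$; symmetrically for $x_{2,k}$.

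Setting $m_k := (1/16+x_{1,k})/2$ and $m'_k := (x_{2,k}+15/16)/2$, I would then define $h_k$ to equal $0$ on $(0,1/16)$, affine from $0$ to $v_k(x_{1,k})$ on $(1/16,m_k)$, constant equal to $v_k(x_{1,k})$ on $(m_k,x_{1,k})$, equal to $v_k$ on $(x_{1,k},x_{2,k})$, symmetric on the right half, and $1$ on $(15/16,1)$. Since $x_{1,k}-1/16 \ge c_k/2$ and $|v_k(x_{1,k})| \le C c_k$, the Lipschitz constant on each transition is bounded uniformly in $k$. Splitting $\Lambda_{\delta_k}(h_k,(0,1))$ into six pieces as in \eqref{lem1-decomp}, the middle block is bounded by $\Lambda_{\delta_k}(v_k,(0,1)) \le \gamma + c_k$; the plateau-to-middle cross terms contribute $O(c_k)$ via the partial-integral bound, exactly as in the derivation of \eqref{lem1-e2}; the self-energies on the transition and plateau (of length $O(c_k)$), together with their cross terms with the outer exact-zero/exact-one region, are $O(c_k) + O(\delta_k\log(1/\delta_k))$ by \eqref{cond-varphi-0}--\eqref{cond-varphi-1}; and the long-range cross term is $O(\delta_k/c_k) = O(\delta_k^{1/2})$. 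Summing gives $\limsup_k \Lambda_{\delta_k}(h_k,(0,1)) \le \gamma$, while $h_k \to H_{1/2}$ in $L^1$ follows from the pointwise smallness of $h_k - H_{1/2}$ on the shrinking modified regions. The main obstacle is the companion lower bound: the symmetric placement about $1/2$ is crucial, because it is what preserves the limit $H_{1/2}$ under rescaling and allows a direct appeal to the definition of $\gamma$, rather than to an auxiliary variant $\gamma_c$ whose coincidence with $\gamma$ would itself require translation-invariance arguments.
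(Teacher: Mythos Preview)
Your proposal is correct, but it takes a different route from the paper's own proof, and the difference is worth noting.

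You follow the template of Lemma~\ref{lem1} faithfully: you place the good points $x_{1,k},x_{2,k}$ in \emph{shrinking} windows $(1/16+c_k/2,\,1/16+c_k)$ and $(15/16-c_k,\,15/16-c_k/2)$, which forces you to prove a companion lower bound $\Lambda_{\delta_k}(v_k,(1/16+c_k,\,15/16-c_k))\ge\gamma-c_k$. Your symmetric-rescaling argument for this is sound: because $(1/16+c_k)+(15/16-c_k)=1$, the affine bijection onto $(0,1)$ fixes the midpoint, so the pulled-back sequence still converges to $H_{1/2}$; combined with the $p=1$ scale-invariance of $\Lambda_\delta$, this yields the lower bound from the definition of $\gamma$. (One minor remark: a companion lower bound is \emph{not} absent from Lemma~\ref{lem1}---it is exactly \eqref{lem1-p4}. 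What is genuinely new in your argument is the observation that the symmetric placement of the sub-interval is what makes the rescaling preserve the limit $H_{1/2}$.)

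The paper avoids this step entirely. Since the statement only asks for $h_k=0$ on $(0,1/16)$ and $h_k=1$ on $(15/16,1)$, there is plenty of room: the paper looks for good points $b_k\in[1/8,1/4]$ and $\hat b_k=b_k+1/2\in[5/8,3/4]$ in \emph{fixed} intervals. A sliding-window average of $\iint_{(\tau,\tau+c_k)\times(0,1)}\varphi_{\delta_k}(|g_k(x)-g_k(y)|)/|x-y|^2\,dx\,dy$ over $\tau\in(1/8,1/5)$ is $O(c_k)$ directly from the upper bound $\Lambda_{\delta_k}(g_k,(0,1))\le\gamma+c_k$, so one obtains \eqref{lem-C1-ob-5} and then \eqref{lem-C1-p1}--\eqref{lem-C1-p2} by averaging on $(\tau_k,\tau_k+c_k)$. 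No lower bound is needed. The modification is then made near $b_k$ (transition and plateau of length $c_k$ sit in $(b_k-2c_k,b_k)$), and since $b_k-2c_k>1/16$, the required boundary behaviour comes for free.

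In short: the paper's argument is more elementary because it exploits the slack between $1/16$ and the fixed window $[1/8,1/4]$ to sidestep the lower bound altogether; your argument is a clean transplant of the Lemma~\ref{lem1} machinery and isolates the pleasant fact that the scale-invariance of $\Lambda_\delta$ for $p=1$, together with symmetry about $1/2$, lets one read off the lower bound directly from the definition of $\gamma$.
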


\begin{proof}  Let $(\delta_k) \subset \mR_+$ and $(g_k) \subset L^1(0, 1)$ be such that 
$$
\lim_{k \to + \infty} \delta_k =0, \quad \lim_{k \to + \infty} g_k  = H_{1/2} \mbox{ in } L^1(0, 1), 
$$
and 
$$
\lim_{k \to + \infty} \Lambda_{\delta_k} (g_k, (0, 1)) = \bb. 
$$
Let $(c_k) \subset \mR_+$ be such that, for large $k$,  
\begin{equation}\label{lem-C1-ob-1}
\lim_{k \to + \infty} c_k = 0, \quad c_k \ge \delta_k^{1/2}, 
\end{equation}
\begin{equation}\label{lem-C1-ob-2}
\Lambda_{\delta_k} (g_k, (0, 1)) \le \gamma + c_k, 
\end{equation}
\begin{equation}\label{lem-C1-ob-3}
\int_{0}^1 |g_k - H_{1/2}| \, dx  \le c_k^{2},  
\end{equation}

In what follows in this proof, $C$ denotes a positive constant depending only on $\alpha$ and $\beta$. 
From \eqref{lem-C1-ob-1}-\eqref{lem-C1-ob-3}, we derive that, for some $\tau_k \in (1/8, 1/5)$ and  with $\hat \tau_k = \tau_k + 1/ 2$,  
\begin{equation}\label{lem-C1-ob-4}
\int_{\tau_k}^{\tau_k + c_k} |g_k - H_{1/2}| \, dx  + \int_{\hat \tau_k}^{\hat \tau_k + c_k} |g_k - H_{1/2}| \, dx  \le C c_k
\end{equation}
and 
\begin{multline}\label{lem-C1-ob-5}
\mathop{\iint}_{(\tau_k, \tau_k  + c_k) \times (0, 1) } \frac{\varphi_{\delta_k} (|g_k(x) - g_k(y)|)}{|x-y|^{2}} \, dx \, dy \\[6pt]
+ \mathop{\iint}_{(\hat \tau_{k}, \hat \tau_k  + c_k) \times (0, 1) } \frac{\varphi_{\delta_k} (|g_k(x) - g_k(y)|)}{|x-y|^{2}} \, dx \, dy \le C c_k.     
\end{multline}
It follows from \eqref{lem-C1-ob-4} and \eqref{lem-C1-ob-5} that, for some $b_k \in [1/8, 1/4]$,  with $\hb_k = b_k + 1/2$,  
\begin{equation}\label{lem-C1-p1}
|g_k(b_k)| + |g_k(\hb_k) - 1| \le C c_k
\end{equation}
and
\begin{equation}\label{lem-C1-p2}
\int_0^1 \frac{\varphi_{\delta_k}(|g_k(b_k) - g_k(y)|)}{|b_k-y|^{2}}  \, dy + \int_0^1 \frac{\varphi_{\delta_k}(|g_k(\hb_k) - g_k(y)|)}{|\hb_k-y|^{2}}  \, dy\le C. 
\end{equation}


Define $h_k: (0, 1) \to \mR$ as follows
\begin{equation*}
h_k (x) = \left\{\begin{array}{cl}
0 & \mbox{ for } 0 <  x <  b_k - 2 c_k,   \\[6pt]
g_k(b_k) & \mbox{ for  }  b_k - c_k  <  x  <  b_k, \\[6pt]
g_k(x) & \mbox{ for  }  b_k \le x  \le \hb_k, \\[6pt]
g_k(\hb_k) & \mbox{ for  }  \hb_k  < x  <  \hb_k + c_k, \\[6pt]
1 & \mbox{ for } \hb_k + 2 c_k <  x <  1,   
\end{array}\right.
\end{equation*}  
and $h_k$ is chosen in $[b_k - 2c_k,  b_k - c_k] \cup [\hb_k + c_k, \hb_k + 2 c_k]$ in such a way that it is affine there and $h_k$ is  continuous at the end points.  As in the proof of \eqref{lem1-e} in Lemma~\ref{lem1}, one can check that 
\begin{equation}\label{lem-C1-p6}
\Lambda_{\delta_k}(h_k, (0, 1)) \le \gamma + C c_k. 
\end{equation}
Therefore, the conclusion holds for $h_k$.  
\end{proof} 

The second lemma used in the proof of \Cref{lem-G1} is 

\begin{lemma}\label{lem-C2} Let $p = 1$ and let $\varphi$ satisfy \eqref{cond-varphi-0}-\eqref{cond-varphi-3}. There exist a sequence $(u_k) \subset L^1(0, 1)$ and a sequence
$(\mu_k) \subset \mR_+$  such that 
$$
\lim_{k \to + \infty} \mu_k = 0, \quad \lim_{k \to + \infty} u_k = U  \mbox{ in } L^1(0, 1), 
$$
and
\begin{equation*}
\limsup_{k \to + \infty} \Lambda_{\mu_k} (u_k, (0, 1))
\le \gamma.
\end{equation*}
\end{lemma}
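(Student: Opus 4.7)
The plan is to tile many shrunken copies of the near-step profile $h_k$ supplied by \Cref{lem-C1} so that the levels of the resulting staircase trace out the identity $U(x) = x$. Let $(h_k, \delta_k)$ be the pair from \Cref{lem-C1}, fix an integer sequence $N_k \to + \infty$, and set $\mu_k := \delta_k / N_k \to 0$. Define the ``unfolded'' profile
$$\tilde u_k(z) := [z] + h_k(z - [z]) \quad \mbox{for } z \in (0, N_k),$$
and let $u_k(x) := \tilde u_k(N_k x)/N_k$ for $x \in (0, 1)$. Since $h_k$ vanishes on $(0, 1/16)$ and equals $1$ on $(15/16, 1)$, the function $\tilde u_k$ is continuous across every integer $j \in \{1, \ldots, N_k-1\}$ and in fact takes the constant value $j$ on $(j - 1/16, j + 1/16)$.

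A change of variables $z = N_k x$, $w = N_k y$ gives the scale identity
$$\Lambda_{\mu_k}(u_k, (0,1)) = \frac{1}{N_k}\,\Lambda_{\delta_k}\bigl(\tilde u_k, (0, N_k)\bigr).$$
I would then decompose the right-hand side as the sum of the ``diagonal'' contributions from the unit sub-intervals $(j, j+1)$ plus a cross-term remainder $R_k$. On each diagonal piece, translation invariance of the integrand yields $\Lambda_{\delta_k}(\tilde u_k, (j, j+1)) = \Lambda_{\delta_k}(h_k, (0,1))$, so the diagonal sum equals exactly $N_k \, \Lambda_{\delta_k}(h_k, (0,1))$, which by \Cref{lem-C1} is at most $N_k (\gamma + o(1))$.

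The main obstacle is to show the cross-term remainder satisfies $R_k \le C N_k \delta_k$, so that after dividing by $N_k$ it contributes only $O(\delta_k) = o(1)$. For non-adjacent pairs ($|j - j'| \ge 2$) one has $|z - w| \ge |j - j'| - 1$, and the bound $\varphi \le \beta$ from \eqref{cond-varphi-1} gives a contribution of order $\delta_k/(|j-j'|-1)^2$, summing to $O(N_k \delta_k)$. For adjacent pairs the flatness of $\tilde u_k$ on the intervals $(j+1-1/16, j+1+1/16)$ is crucial: if $|z - w| \le 1/16$ then both $z$ and $w$ lie in the plateau around $j+1$, so $\tilde u_k(z) = \tilde u_k(w) = j+1$ and the integrand vanishes; if $|z - w| > 1/16$, the bound $\varphi \le \beta$ yields $O(\delta_k)$ per adjacent pair, totalling $O(N_k \delta_k)$ as well. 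Combining, $\limsup_k \Lambda_{\mu_k}(u_k, (0,1)) \le \gamma$. Finally, a direct change of variables shows
$$\int_0^1 |u_k - U|\, dx = \frac{1}{N_k} \int_0^1 |h_k(s) - s|\, ds \to 0,$$
since $h_k \to H_{1/2}$ in $L^1(0,1)$ keeps the right-hand integrand uniformly bounded. This completes the construction.
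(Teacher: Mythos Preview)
Your construction is correct and follows the same staircase idea as the paper: tile $N_k$ rescaled copies of the near-step profile $h_k$ from \Cref{lem-C1}, use the scale identity to reduce to $\Lambda_{\delta_k}$ on the unfolded domain, and split into diagonal blocks plus cross terms that vanish near the integer nodes thanks to the $1/16$-plateaus. The one noteworthy difference is that your cross-term bookkeeping is slightly sharper: by separating adjacent from non-adjacent blocks you obtain $R_k = O(N_k\delta_k)$, hence $O(\delta_k)$ after dividing by $N_k$, so \emph{any} sequence $N_k\to\infty$ works. The paper instead records the cruder bound $C\frac{\delta_k}{n}\ln n$ for each block and must therefore pick $n_k=[\ln\delta_k^{-1}]$ to force $\delta_k\ln n_k\to 0$. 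Either way the argument closes; your version just avoids that extra choice.
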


\begin{proof} Let $(\delta_k)$ and  $(h_k) \subset L^1(0, 1)$ be the sequences satisfying the conclusion of Lemma~\ref{lem-C1}.  Given $n \in \mN$, set $I_j=  (j/n, (j+1)/ n) $ for 
$0 \le j \le n-1$,  and define
\begin{equation}\label{def-fkn}
f_{k, n} (x) =  \frac{1}{n} h_k \Big( n(x - j/n) \Big) + \frac{j}{n}  \mbox{ for } x \in I_j. 
\end{equation}
By a change of variables,   we obtain 
\begin{equation}\label{lem-C2-p1}
\int_{0}^1 |f_{k, n} (x) - x| \, dx  =  \frac{1}{n} \int_{0}^1 |h_k (x) - x| \, dx.  
\end{equation}

We next estimate
\begin{equation*}
 \Lambda_{\delta_k/n} (f_{k, n}, (0, 1)).
\end{equation*}
It is clear that 
\begin{multline}\label{lem-C2-p2}
\Lambda_{\delta_k/n} (f_{k, n}, (0, 1))
\le   \sum_{j=0}^{n-1} \Lambda_{\delta_k/n} (f_{k, n}, I_j) \\[6pt]
 +  \sum_{j=0}^{n-1} \mathop{\iint}_{I_j \times ( (0, 1) \setminus I_j)}
 \frac{ \varphi_{\delta_k/n} (|f_{k, n}(x) - f_{k,n}(y)|)}{|x-y|^{2}} \, dx \, dy.
\end{multline}
We have, by a change of variables,  
\begin{equation}\label{lem-C2-p3}
\Lambda_{\delta_k/n} (f_{k, n}, I_j) = \frac{1}{n}\Lambda_{\delta_k} (h_k, (0, 1)). 
\end{equation}
It is clear that 
\begin{multline}
\mathop{\iint}_{I_j \times ((0, 1) \setminus I_j)}
 \frac{ \varphi_{\delta_k/n} (|f_{k, n}(x) - f_{k,n}(y)|)}{|x-y|^{2}} \, dx \, dy \\[6pt]
 = \mathop{ \mathop{\iint}_{I_j \times ((0, 1) \setminus I_j)}}_{|x - y| < 1/(16 n)}
 \frac{ \varphi_{\delta_k/n} (|f_{k, n}(x) - f_{k,n}(y)|)}{|x-y|^{2}} \, dx \, dy \\[6pt] + \mathop{ \mathop{\iint}_{I_j \times ((0, 1) \setminus I_j)}}_{|x -y| > 1/(16n)}
 \frac{ \varphi_{\delta_k/n} (|f_{k, n}(x) - f_{k,n}(y)|)}{|x-y|^{2}} \, dx \, dy. 
\end{multline}
Since the first term on the RHS of the above identity is 0,  by straightforward integral estimates, we obtain 
\begin{equation}\label{lem-C2-p4}
\mathop{\iint}_{I_j \times ((0, 1) \setminus I_j)}
 \frac{ \varphi_{\delta_k/n} (|f_{k, n}(x) - f_{k,n}(y)|)}{|x-y|^{2}} \, dx \, dy \le C \frac{\delta_k}{n} \ln n. 
\end{equation}
Set 
$$
\mbox{$n_k=  [\ln \delta_k^{-1}]$ (the integer part of $\ln \delta_k^{-1}$)} \quad \mbox{ and } \quad \mu_k = \delta_k/ n_k,  
$$
so that $n_k \to + \infty$ and $\mu_k \to 0$ as $k \to + \infty$. 
Combining \eqref{lem-C2-p2}, \eqref{lem-C2-p3},  and \eqref{lem-C2-p4} yields, with $u_k  = f_{k, n_k}$,   
\begin{equation}\label{lem-C2-p5}
 \Lambda_{\mu_k} (u_k, (0, 1)) \le \Lambda_{\delta_k} (h_k, (0, 1)) + C \delta_k \ln n_k. 
\end{equation}
It follows that 
\begin{equation}\label{lem-C2-p6}
\limsup_{k \to + \infty} \Lambda_{\mu_k} (u_k, (0, 1)) \le  \gamma 
\end{equation}
since $\limsup_{k \to + \infty} \Lambda_{\delta_k}(h_k, (0, 1)) \le \gamma$ by Lemma~\ref{lem-C1}. 
Since $n_k \to + \infty$, we have, by \eqref{lem-C2-p1}, 
\begin{equation}\label{lem-C2-p7}
\lim_{k \to + \infty} \int_{0}^1 |u_{k} - U| \, dx = 0. 
\end{equation}
The conclusion follows from \eqref{lem-C2-p6} and \eqref{lem-C2-p7}. 
\end{proof}

We are ready to give the 
\begin{proof}[Proof of \Cref{lem-G1}] By Property (G2) applied with $g = H_{1/2}$ and $I =(0, 1)$, there exists a family $(g_\delta) \subset L^1(0, 1)$ such that $g_\delta \to H_{1/2}$ in $L^1(0, 1)$ and
$$
\limsup_{\delta \to 0} \Lambda_\delta(g_\delta, (0, 1)) \le \C.  
$$
This implies, by the definition of $\gamma$ in  \eqref{defb1}, that $\gamma \le \C$.  By \Cref{lem-C2} and Remark~\ref{rem-k}, one obtains $\C \le \gamma$. The conclusion follows. 
\end{proof}

We now give the 

\begin{proof}[Proof of \Cref{lem-C4}]
We begin with the following 

\medskip 
{\it Claim 1:} For any $\eps > 0$, there exist two positive numbers
$\hat \delta_1$,  $\hat \delta_2$ such that for  any $a_1, b_1, c \in \mR$ with $a_1<b_1$, and for any 
$u \in
L^1(a_1, b_1)$ satisfying   
\begin{equation*}
\|u - c H_{a_1 + \frac{1}{2} (b_1-a_1)}\|_{L^{1}(a_1, b_1)} \le  |c| (b_1 - a_1) \hat \delta_{1}, 
\end{equation*}
one has 
\begin{equation*}
\Lambda_{\delta}(u, (a_1, b_1)) \ge  |c| (\gamma - \eps) \mbox{ for all } \delta  \in (0,  |c| \hat \delta_2). 
\end{equation*}
To establish the claim, we first consider the case $(a_1, \, b_1) = (0, 1)$ and $c = 1$. The existence of $\hat \delta_{1}$ and $\hat \delta_{2}$ in this case is a direct consequence of the definition of $\gamma$ by a contradiction argument. The general case follows from this case by a change of variables. 

\medskip 
We now prove \eqref{lem-C4-state}. Without loss of generality, one may assume that $t_1= 0$, $t_2 = 1$,  $u(t_1) = 0$,  and $u(t_2) = 1$. 
It suffices to prove that 
\begin{equation}\label{lem-C4-main}
 \liminf_{k \to + \infty}  \Lambda_{\delta_k}(u_{k}, (0, 1)) \ge \gamma, 
\end{equation}
for every $(\delta_k) \subset \mR_+$ converging to 0,  and  for every $(u_k) \subset L^1(0, 1)$ such that $u_k \to u$ in $L^1(0, 1)$ and $\sup_{k} \Lambda_{\delta_k}(u_{k}, (0, 1)) < + \infty$. 

Set
\begin{equation*}
T = \sup_{k}  \Lambda_{\delta_k}(u_{k}, (0, 1)) < + \infty. 
\end{equation*}
Fix $\eps>0$ (arbitrary). Let $\hat \delta_{1} $ be the constant in the Claim corresponding to $\eps$. Without loss of generality, one may assume that $\hat \delta_1 < 1$. Let $c$ be a small positive number such that 
$$
\int_0^{c} |u(x) | \, dx  + \int_{1-c}^{1} |u(x) - 1| \, dx \le c \hat \delta_1^2/64. 
$$
Since $0$ and $1$ are Lebesgue points, such a $c$ exists.  Since $u_k \to u$ in $L^1(0, 1)$, it follows that, for large $k$, 
$$
\int_0^{c} |u_k(x)| \, dx  + \int_{1-c}^{1} |u_k(x) - 1| \, dx \le c \hat \delta_1^2/32. 
$$
This implies, for large $k$, 
$$
| A_k | \ge c/2, 
$$
where $A_k = \{ x  \in (0, c); |u_k(x)| \le \delta_1^2/16 \} $. There exists $x_{1, k} \in A_k$ such that 
\begin{equation}\label{lem-C4-p1}
 \int_{0}^{1} \frac{\varphi_{\delta_k} ( |u_{k} (x_{1, k}) - u_{k} (y) |)}{|x_{1, k} - y|^{2}} \, dy 
\le \frac{1}{|A_k|} \int_{A_k} \, dx  \int_{0}^{1} \frac{\varphi_{\delta_k}(|u_{k}(x) - u_{k}(y)| )}{|x - y|^{2}} \, dy  \le 2T /c. 
\end{equation}
Similarly, there exists $x_{2, k} \in \{ x  \in (1-c, 1); |u_k(x)-1| \le \delta_1^2/16 \}$ such that 
\begin{equation}\label{lem-C4-p2}
\int_{0}^{1} \frac{\varphi_{\delta_k} ( |u_{k} (x_{2, k}) - u_{k} (y) |)}{|x_{2, k} - y|^{2}} \, dy 
\le 2T/c. 
\end{equation}
It is then clear that 
\begin{equation}\label{lem-C4-p3}
|u_{k}(x_{1, k})| +   |u_{k}(x_{2, k}) - 1 | \le  \hat \delta_{1}^{2}/8,  
\end{equation}
for large $k$. 


\medskip
For  each (fixed) $n>0$ (large), define $v_{k} :  (-n, n) \to \mR$ as follows
\begin{equation*}
v_{k} (x) = \left\{\begin{array}{cl} u_{k} (x_{1, k}) & \mbox{ if } -n <  x <  x_{1, k}, \\[6pt]  
u_{k}(x) & \mbox{ if }  x_{1, k} \le x \le x_{2, k}, \\[6pt]
u_{k}(x_{2, k}) & \mbox{ if } x_{2, k} <  x <  n. 
\end{array} \right.
\end{equation*}
We have, since $v_k$ is constant on $(-n, x_{1, k})$ and on $(x_{2, k}, n)$, 
\begin{align}\label{lem-C4-d1}
\Lambda_{\delta_k}(v_{k}, (-n, n) ) \le & \Lambda_{\delta_k}(v_{k}, (x_{1, k}, x_{2, k}) ) \nonumber \\[6pt]
& + 2 \int_{x_{1, k} - \delta_k}^{x_{1, k}} \int_{x_{1, k}}^{x_{2, k}} \frac{\varphi_{\delta_k}(|v_{k}(x) - v_{k}(y)| )}{|x - y|^{2}} \, dy \, dx\nonumber \\[6pt] 
& + 2 \int_{x_{1, k}}^{x_{2, k}} \int_{x_{2, k}}^{x_{2, k} + \delta_k} \frac{\varphi_{\delta_k}(|v_{k}(x) - v_{k}(y)| )}{|x - y|^{2}} \, dy \, dx \nonumber \\[6pt] 
& +  2 \int_{-n}^{x_{1, k} - \delta_k} \int_{x_{1, k}}^{x_{2, k}} \frac{\varphi_{\delta_k}(|v_{k}(x) - v_{k}(y)| )}{|x - y|^{2}} \, dy \, dx\nonumber \\[6pt]
& + 2 \int_{x_{1, k}}^{x_{2, k}} \int_{x_{2, k} + \delta_k}^{n} \frac{\varphi_{\delta_k}(|v_{k}(x) - v_{k}(y)| )}{|x - y|^{2}} \, dy \, dx \nonumber \\[6pt] 
& +  \mathop{\int_{-n}^{n} \int_{-n}^{n}}_{|x - y|> x_{2, k} - x_{1, k}} \frac{\varphi_{\delta_k}(|v_{k}(x) - v_{k}(y)| )}{|x - y|^{2}} \, dy \, dx. 
\end{align}
By straightforward integral estimates, we have 
\begin{multline*}
 \int_{x_{1, k} - \delta_k}^{x_{1, k}} \int_{x_{1, k}}^{x_{2, k}} \frac{\varphi_{\delta_k}(|v_{k}(x) - v_{k}(y)| )}{|x - y|^{2}} \, dy \, dx \\[6pt] + \int_{x_{1, k}}^{x_{2, k}} \int_{x_{2, k}}^{x_{2, k} + \delta_k} \frac{\varphi_{\delta_k}(|v_{k}(x) - v_{k}(y)| )}{|x - y|^{2}} \, dy \, dx  
 \mathop{\le}^{\eqref{lem-C4-p1}-\eqref{lem-C4-p2}} C \delta_k, 
\end{multline*}
\begin{multline*}
 \int_{-n}^{x_{1, k} - \delta_k} \int_{x_{1, k}}^{x_{2, k}} \frac{\varphi_{\delta_k}(|v_{k}(x) - v_{k}(y)| )}{|x - y|^{2}} \, dy \, dx \\[6pt]
 + \int_{x_{1, k}}^{x_{2, k}} \int_{x_{2, k} + \delta_k}^{n} \frac{\varphi_{\delta_k}(|v_{k}(x) - v_{k}(y)| )}{|x - y|^{2}} \, dy \, dx \mathop{\le}^{\eqref{cond-varphi-1}} C \delta_k \ln n + \delta_k |\ln \delta_k|, 
\end{multline*}
$$
 \mathop{\int_{-n}^{n} \int_{-n}^{n}}_{|x - y|> x_{2, k} - x_{1, k}} \frac{\varphi_{\delta_k}(|v_{k}(x) - v_{k}(y)| )}{|x - y|^{2}} \, dy \, dx \mathop{\le}^{\eqref{cond-varphi-1}} C \delta_k \ln n, 
$$
for some positive constant $C$ independent of $k$ and $n$. It follows from \eqref{lem-C4-d1} that 
\begin{equation*}
\Lambda_{\delta_k}(v_{k}, (-n, n)) \le \Lambda_{\delta_k}(u_{k}, (0, 1) ) +  C \delta_k \ln n +  C \delta_k |\ln \delta_k|. 
\end{equation*}
This implies, for every $n$, 
\begin{equation}\label{lem-C4-e2}
\liminf_{k \to + \infty} \Lambda_{\delta_k}(u_{k}, (0, 1) ) \ge \liminf_{k \to + \infty} \Lambda_{\delta_k}(v_{k}, (-n, n)). 
\end{equation}

We have, by \eqref{lem-C4-p3},  
\begin{equation}\label{lem-C4-e3}
\|v_{k}(x) -  H_{0} (x)  \|_{L^{1}(-n, n)} \le \hat \delta_{1}^{2} n / 2   + C,  
\end{equation}
since $(\| u_k\|_{L^1(0,1)})$ is bounded. 
We now fix $n \ge 2 C / \hat \delta_1$ so that the RHS of \eqref{lem-C4-e3} is less than $n \hat \delta_{1}$ since $\hat \delta_1 < 1$.  Applying the Claim with $c=1$ and $(a_1,b_1) = (-n, n)$, we have, for large $k$, 
\begin{equation}\label{lem-C4-e4}
 \Lambda_{\delta_k}(v_{k}, (-n, n)) \ge  \gamma -\eps. 
\end{equation}
Combining  \eqref{lem-C4-e2} and \eqref{lem-C4-e4} yields 
\begin{equation*}
\liminf_{k \to + \infty} \Lambda_{\delta_k}(u_{k}, (0, 1) ) \ge   \gamma -\eps.
\end{equation*}
Since $\eps>0$ is arbitrary, \eqref{lem-C4-main} follows. 
\end{proof}

The proof of Property (G1) for $p=1$ is complete. 
 
\subsubsection{Proof of Property (G1) for $p>1$} \label{sect-G1-p}
 
 Throughout  this section, we assume that $\C > 0$ since there is nothing to prove otherwise.  The first  key ingredient of the proof is  
   
\begin{lemma}\label{lem1-G1} 
Let $p > 1$ and let $\varphi$ satisfy \eqref{cond-varphi-0}-\eqref{cond-varphi-3}.
Let $ a < b$, and $u \in L^p(a, b)$ and  let $t_1, t_2 \in (a, b)$ be two Lebesgue points of $u$. 
Then, for some positive constant $\sigma$ depending only on $\alpha$, $\beta$,  and $p$, 
\begin{equation}\label{lem1-G1-es}
 \liminf_{\delta \to 0} \Lambda_{\delta} (u_\delta, (t_1, t_2)) \ge \sigma \C (t_2 - t_1)^{1-p} |u(t_2) - u(t_1)|^p, 
\end{equation}
for any family  $(u_\delta) \subset L^p(t_1, t_2)$ such that $u_\delta \to u$ in $L^p(t_1, t_2)$, as $\delta \to 0$.
\end{lemma}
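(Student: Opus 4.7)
The proof will be the $p>1$ analogue of \Cref{lem-C4}, with the reference object being the affine function $U(x)=x$ on $(0,1)$ in place of the step function $H_{1/2}$. The first ingredient is a scaled ``local'' Claim: \emph{for every $\eps>0$ there exist $\hat\delta_1,\hat\delta_2>0$ such that for any $a<b$, any $c\in\mR\setminus\{0\}$, and any $u\in L^p(a,b)$ satisfying
\[
\|u-U_{a,b,c}\|_{L^p(a,b)}\le\hat\delta_1\,|c|\,(b-a)^{1/p}, \qquad U_{a,b,c}(x):=\frac{c(x-a)}{b-a},
\]
one has $\Lambda_\delta(u,(a,b))\ge(b-a)^{1-p}|c|^p(\C-\eps)$ for every $\delta\in(0,|c|\hat\delta_2)$.} The normalized case $(a,b,c)=(0,1,1)$ is an immediate contradiction argument from \Cref{rem-k} (otherwise one produces $\delta_k\to 0$ and $u_k\to U$ in $L^p(0,1)$ with $\liminf\Lambda_{\delta_k}(u_k,(0,1))<\C$), and the general case follows from the change of variables $v(y)=u(a+(b-a)y)/c$, which gives $\Lambda_\delta(u,(a,b))=(b-a)^{1-p}|c|^p\Lambda_{\delta/|c|}(v,(0,1))$.

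After reducing to $t_1=0$, $t_2=1$, $u(0)=0$, $u(1)=c\neq 0$, and extracting a subsequence $(\delta_k)$ realizing the $\liminf$ with $T:=\sup_k\Lambda_{\delta_k}(u_{\delta_k},(0,1))<\infty$ (otherwise there is nothing to prove), the next step mimics the averaging/pigeonhole step in the proof of \Cref{lem-C4}: using the Lebesgue-point hypothesis at $0$ and $1$, the $L^p$-convergence $u_{\delta_k}\to u$, and the bound $T$, I would select small $\rho,\eta>0$ and points $x_{1,k}\in(0,\rho)$, $x_{2,k}\in(1-\rho,1)$ such that $|u_{\delta_k}(x_{1,k})|\le\eta$, $|u_{\delta_k}(x_{2,k})-c|\le\eta$, and the one-point kernel integrals
$\int_0^1\varphi_{\delta_k}(|u_{\delta_k}(x_{i,k})-u_{\delta_k}(y)|)|x_{i,k}-y|^{-(p+1)}\,dy$
are uniformly bounded in terms of $T$ and $\rho$.

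I would then define $v_k$ on an extended interval $(-R,R+1)$ by affine extrapolation of slope $c$ outside $(x_{1,k},x_{2,k})$, namely $v_k=u_{\delta_k}$ on $(x_{1,k},x_{2,k})$ and $v_k(x)=u_{\delta_k}(x_{i,k})+c(x-x_{i,k})$ on the corresponding tails. Setting $\tilde U(x):=u_{\delta_k}(x_{1,k})-cx_{1,k}+cx$, the error $v_k-\tilde U$ vanishes on the left tail, is pointwise $O(\eta+|c|\rho)$ on the right tail (using the boundary constraints), and on the middle is bounded by $\|u-cU_{0,1,c}\|_{L^p(0,1)}+\|u-u_{\delta_k}\|_{L^p}$. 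Taking $R$ large enough for the hypothesis of the scaled Claim to be met, and bounding the added integrals by a decomposition in the spirit of the one in the proof of \Cref{lem1} (using \eqref{cond-varphi-0}-\eqref{cond-varphi-1} for the tail self-interactions and the bounded one-point kernels for the cross terms), I would obtain
\[
\Lambda_{\delta_k}(v_k,(-R,R+1))\le\Lambda_{\delta_k}(u_{\delta_k},(0,1))+o(1), \qquad k\to\infty,
\]
and combine with the Claim to conclude $\liminf_k\Lambda_{\delta_k}(u_{\delta_k},(0,1))\ge(2R+1)^{1-p}|c|^p(\C-\eps)$.

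\textbf{Main obstacle.} The delicate point is that the middle contribution $\|u-cU_{0,1,c}\|_{L^p(0,1)}$ is a bounded but non-vanishing quantity depending on $u$, so the Claim hypothesis a priori forces $R$ to grow when $|c|^p$ is small compared to this middle error, making the prefactor $(2R+1)^{1-p}$ depend on $u$ rather than only on $\alpha,\beta,p$. Ensuring that the resulting $\sigma$ is universal is the technical crux, and the step I would think hardest about before writing out the full argument. I expect the resolution to exploit (i) the scaling identity $\Lambda_\delta(u,(a,b))=(b-a)^{1-p}|c|^p\Lambda_{\delta/|c|}(v,(0,1))$ to absorb the offending $u$-dependent scale into the $(t_2-t_1)^{1-p}|u(t_2)-u(t_1)|^p$ factor already present on the right-hand side, together with (ii) the elementary Jensen-type inequality $\sum_j|I_j|^{1-p}|u(t_{j+1})-u(t_j)|^p\ge(t_2-t_1)^{1-p}|u(t_2)-u(t_1)|^p$ applied to a fine partition into Lebesgue points, which reduces the argument to the regime where the middle term is genuinely small relative to $|c|^p$ on each sub-interval.
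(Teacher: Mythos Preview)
Your extension argument breaks at the step $\Lambda_{\delta_k}(v_k,(-R,R+1))\le\Lambda_{\delta_k}(u_{\delta_k},(0,1))+o(1)$. In the $p=1$ proof this works because the tails of $v_k$ are \emph{constant}, so their self-interaction vanishes; here your tails are affine with slope $c$, and by Proposition~\ref{pro-pointwise} their contribution to $\Lambda_{\delta_k}$ tends to $2R|c|^p$, not to $0$. Once you correct this, the Claim (with total jump $c(2R+1)$ on $(-R,R+1)$) yields $\liminf_k\Lambda_{\delta_k}(u_{\delta_k},(0,1))\ge (2R+1)|c|^p(\C-\eps)-2R|c|^p$, which is useless for large $R$ whenever $\C<1$. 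If instead you keep the tails constant, $v_k$ is never close in $L^p$ to \emph{any} affine function on $(-R,R+1)$ at the scale $\hat\delta_1$ required by the Claim. Your ``Main obstacle'' is therefore not a technicality but the heart of the matter, and neither proposed fix resolves it: (i) the scaling identity is already absorbed in the Claim and cannot remove the dependence on $\|u-cU\|_{L^p}/|c|$; (ii) the partition argument is circular (it presupposes the very lemma you are proving on the subintervals), and in any case $u$ is merely in $L^p$ at this stage, so there is no mechanism forcing $u$ to be close to affine on small subintervals.

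The paper's proof avoids the extension idea entirely and instead \emph{periodizes}. After selecting good endpoints and flattening $u_k$ to $0$ near $0$ and to $1$ near $1$ (call the result $\hat u_k$), one sets $f_{k,n}(x)=\hat u_k(x-[x])+[x]$ on $(0,n)$ and rescales to $g_{k,n}(x)=\tfrac{1}{n}f_{k,n}(nx)$ on $(0,1)$. A change of variables gives $\Lambda_{\delta_k/n}(g_{k,n},(0,1))=\tfrac{1}{n}\Lambda_{\delta_k}(f_{k,n},(0,n))\le C(\tau+c_k)$ with $C=C(\alpha,\beta,p)$, while the crucial computation
\[
\int_0^1 |g_{k,n}-U|^p\,dx=\frac{1}{n^{p-1}}\int_0^1|\hat u_k-U|^p\,dx
\]
shows $g_{k,n_k}\to U$ in $L^p(0,1)$ for any $n_k\to\infty$, \emph{regardless of how far $u$ is from affine}, precisely because $p>1$. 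The definition of $\C$ then gives $\C\le C\tau$, i.e.\ $\sigma=1/C$ depending only on $\alpha,\beta,p$. This $n$-fold replication is the mechanism that washes out the $u$-dependent middle error; it has no analogue in your extension scheme.
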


\begin{proof}  Without loss of generality, one may assume that $t_1 = 0$, $t_2 = 1$, $u(t_1) = 0$, and $u(t_2) = 1$. Let $(\delta_k)$ and $(u_k)$ be arbitrary such that $\delta_k \to 0$, $u_k \to u$ in $L^p(0, 1)$, and 
$$
 \lim_{k \to + \infty} \Lambda_{\delta_k} (u_k, (0, 1)) \mbox{ exists and is finite}. 
$$
Denote $\tau$ the limit of $\Lambda_{\delta_k} (u_k, (0, 1))$.  In order to establish \eqref{lem1-G1-es}, it suffices to prove 
\begin{equation} \label{lem1-claim}
\C \le C \tau. 
\end{equation}
Here and in what follows, $C$ denotes a positive constant  depending only on $\alpha$, $\beta$,  and $p$. 

Let $(c_k) \subset \mR_+$ be such that 
\begin{equation}\label{lem1-G1-p1}
  \lim_{k \to + \infty} c_k =0, \quad c_k \ge \delta_k^{1/2}, 
\end{equation}
\begin{equation}\label{lem1-G1-p1-1}
\Lambda_{\delta_k} (u_k, (0, 1)) \le \tau + c_k, 
\end{equation}
\begin{equation}\label{lem1-G1-p2}
\int_{0}^1 \frac{\varphi_{\delta_k}(|u_k(c_k) - u_k(y)|)}{|c_k - y|^{p+1}} \, dy  \le C c_k^{-1}(\tau + c_k), 
\end{equation}
\begin{equation}\label{lem1-G1-p3}
\int_{0}^1 \frac{\varphi_{\delta_k}(|u_k(1-c_k) - u_k(y)|)}{|1-c_k - y|^{p+1}} \, dy  \le C c_k^{-1}(\tau + c_k). 
\end{equation}
\begin{equation}\label{lem1-G1-p4}
|u_k(c_k)| + |u_k(1- c_k) - 1|  \to 0,   
\end{equation}
for large $k$.  

For simplicity of presentation, we will assume that $u_k(c_k) = 0$ and $u_k(1 - c_k) = 1$.  Define $\hat u_k: (0, 1 ) \to \mR$ as follows
\begin{equation*}
\hat u_k (x) = \left\{\begin{array}{cl}
0& \mbox{ if } 0 <   x < c_k, \\[6pt]
u_k(x) & \mbox{ if } c_k \le x \le  1 - c_k, \\[6pt] 
1 & \mbox{ if }  1 - c_k < x <  1. 
\end{array}\right.
\end{equation*}
For $n \in \mN$, set 
\begin{equation*}
f_{k, n}(x) = \hat u_k(x - [x]) + [x] \mbox{ for } x \in (0, n)
\end{equation*}
and 
\begin{equation*}
g_{k, n} (x) = \frac{1}{n} f_{k, n} (nx) \mbox{ for } x \in (0, 1).
\end{equation*}

We have, by a change of variables,  
\begin{equation*}
\Lambda_{\delta_k/n} (g_{k, n}, (0, 1)) = \frac{1}{n} \Lambda_{\delta_k} (f_{k, n}, (0, n)). 
\end{equation*}
Using \eqref{lem1-decomposition}, one can check, by straightforward integral estimates,  that 
\begin{equation*}
\Lambda_{\delta_k} (f_{k, n}, (0, n)) \le  C n (\tau + c_k) + C n \delta_k^p/ c_k^p. 
\end{equation*}
This implies, by \eqref{lem1-G1-p1} and \eqref{lem1-G1-p1-1}, 
\begin{equation}\label{lem1-p5}
 \Lambda_{\delta_k/n} (g_{k, n}, (0, 1))  \le  C \tau + C c_k.
\end{equation}

On the other hand, we have 
\begin{equation}\label{lem1-p6}
\int_{0}^1 |g_{k, n}(x) - x|^p \, dx   = \frac{1}{n^p} \int_0^n |f_{k, n} (nx) - n x|^p \, dx = 
\frac{1}{n^{p-1}} \int_0^1 |\hat u_{k}(x) - x|^p \, dx.  
\end{equation}
Taking $n = n_k =  [\ln \delta_k^{-1}]$, we derive from \eqref{lem1-p6} that 
\begin{equation*}
\lim_{k \to + \infty} \int_{0}^1 |g_{k, n_k}(x) - U|^p \, dx = 0, 
\end{equation*}
since $p>1$ and $(\|\hat u_k \|_{L^p(0, 1)})$ is bounded. By noting that $\delta_k/ n_k  \to 0$ as $k \to + \infty$, we derive from the definition of $\C$ that 
\begin{equation}\label{lem1-p7}
\C \le \liminf_{k \to + \infty} \Lambda_{\delta_k/n_k} (g_{k, n_k}, (0, 1)). 
\end{equation}
Combining \eqref{lem1-p5} and \eqref{lem1-p7} yields 
$$
\C \le C \tau, 
$$
which is \eqref{lem1-claim}. 
\end{proof}

From  Lemma~\ref{lem1-G1}, we now derive 

\begin{lemma}\label{lem1-G1-1} Let $p > 1$ and let $\varphi$ satisfy \eqref{cond-varphi-0}-\eqref{cond-varphi-3}. Let $u \in L^p(\mR)$ and assume that,  for some $(u_\delta) \subset L^p(\mR)$ converging to $u$ in $L^p(\mR)$, 
$$
\liminf_{\delta \to 0} \Lambda_{\delta}(u_\delta, \mR) < + \infty. 
$$
Then $u \in W^{1, p}(\mR)$. 
\end{lemma}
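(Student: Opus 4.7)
The plan is to upgrade the lower bound of \Cref{lem1-G1}, which controls the contribution of a single pair of Lebesgue points, into a uniform control of \emph{all} finite partition sums, then invoke the classical Riesz characterization of $W^{1,p}(\mR)$ for $p>1$. That characterization states that $u \in L^p(\mR)$ belongs to $W^{1,p}(\mR)$ if and only if
\[
[u]_p := \sup \sum_{j=1}^{m} \frac{|u(t_{j+1}) - u(t_j)|^{p}}{(t_{j+1}-t_j)^{p-1}} < +\infty,
\]
where the supremum is taken over all finite increasing sequences $t_1 < \cdots < t_{m+1}$ of Lebesgue points of $u$; moreover, $[u]_p = \int_\mR |u'|^p \, dx$ in that case.

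First I would extract a sequence $\delta_k \to 0$ along which $\Lambda_{\delta_k}(u_{\delta_k}, \mR)$ converges to $L := \liminf_{\delta \to 0} \Lambda_\delta(u_\delta, \mR) < +\infty$. Fix any partition $t_1 < \cdots < t_{m+1}$ of Lebesgue points of $u$. Since the integrand defining $\Lambda$ is non-negative and the product domains $(t_j, t_{j+1}) \times (t_j, t_{j+1})$ are pairwise disjoint,
\[
\Lambda_{\delta_k}(u_{\delta_k}, \mR) \ge \sum_{j=1}^{m} \Lambda_{\delta_k}(u_{\delta_k}, (t_j, t_{j+1})).
\]
Since $u_{\delta_k} \to u$ in $L^p(\mR)$, we also have $u_{\delta_k} \to u$ in $L^p(t_j, t_{j+1})$ for every $j$, and applying \Cref{lem1-G1} on each subinterval yields
\[
\liminf_{k \to +\infty} \Lambda_{\delta_k}(u_{\delta_k}, (t_j, t_{j+1})) \ge \sigma \C (t_{j+1}-t_j)^{1-p} |u(t_{j+1})-u(t_j)|^p.
\]
Taking the liminf as $k \to +\infty$ in the previous display and using superadditivity of the liminf across a finite sum gives
\[
L \ge \sigma \C \sum_{j=1}^{m} \frac{|u(t_{j+1})-u(t_j)|^p}{(t_{j+1}-t_j)^{p-1}}.
\]

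Since $\C > 0$ by the standing hypothesis of this subsection and the partition was arbitrary, we conclude $[u]_p \le L/(\sigma \C) < +\infty$, whence $u \in W^{1,p}(\mR)$ by the Riesz characterization. No step is a serious obstacle; the only subtle ingredient is the characterization itself, which is genuinely a $p>1$ phenomenon and fails at $p=1$ (which is precisely why the $p=1$ argument went through the $BV$ characterization \eqref{G1-pro} instead).
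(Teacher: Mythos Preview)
Your argument is correct. Both your proof and the paper's rest on \Cref{lem1-G1} summed over a partition, but they diverge in the final characterization of $W^{1,p}$ invoked. The paper first upgrades \eqref{lem1-G1-es} to a bound involving the essential oscillation of $u$ over an arbitrary interval (so that partition endpoints need not be Lebesgue points), partitions $\mR$ into intervals of fixed length $h$, and thereby obtains a uniform bound on $\|\tau_h u\|_{L^p}$; it then concludes via the standard difference-quotient criterion of \cite[Chapter~8]{BrAnalyse1}. You bypass the oscillation step by working only with Lebesgue-point partitions and invoking the Riesz-type criterion $\sup \sum_j |u(t_{j+1})-u(t_j)|^p/(t_{j+1}-t_j)^{p-1}<\infty \Leftrightarrow u\in W^{1,p}$. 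Your route is a touch shorter and parallels the $p=1$ argument via \eqref{G1-pro} more closely; the paper's route is more self-contained in that the difference-quotient criterion is textbook, whereas your partition criterion, while classical (and easily reduced to the difference-quotient one by averaging over translates of a uniform mesh), is less commonly cited and would benefit from an explicit reference.
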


\begin{proof} As a consequence of \Cref{lem1-G1}, one has, for every $ - \infty < a< b<  + \infty$,  
\begin{equation}\label{lem1-G1-1-p1}
 \liminf_{\delta \to 0} \Lambda_{\delta} (u_\delta, (a, b)) \ge \sigma \C (b - a)^{1-p} |\mathop{\mathrm{ess \; sup} }_{x \in (a, b )}
u - \mathop{\mathrm{ess \; inf} }_{x \in (a, b)} u|^p, 
\end{equation}
for some constant $\sigma > 0$, independent of $a$ and $b$. Set, for $h \in (0, 1)$, 
$$
\tau_{h}(u)(x) =  \frac{1}{h} \Big(u(x+h) - u (x) \Big) \quad \mbox{ for } x \in \mR.
$$
For each $m \ge 2$ and $h \in (0, 1)$, fix $K > 0$ such that $Kh \ge m$. Then 
\begin{equation}\label{lem1-G1-1-p2}
\int_{-m}^m |\tau_h(u)|^p \, dx \le \sum_{k = -K}^{K}\int_{k
h}^{(k+1)h } |\tau_h(u)|^p \, dx.
\end{equation}
Since, for every $a \in \mR$, 
\begin{equation*}
\int_a^{a + h } |\tau_{h}(u)|^p \, dx \le \int_a^{a + h
}\frac{1}{h^p} |\mathop{\mathrm{ess \; sup} }_{t \in (a, a + 2h )}
u - \mathop{\mathrm{ess \; inf} }_{t \in (a, a + 2h ) } u|^p \,
dx,
\end{equation*}
it follows from \eqref{lem1-G1-1-p1} that
\begin{equation*}
\int_a^{a + h } |\tau_{h}(u)|^p \, dx \le \frac{2^{p-1}}{\sigma \C} \liminf_{\delta \to 0} \Lambda_{\delta} (u_\delta, (a, a+2h)). 
\end{equation*}
We derive from   \eqref{lem1-G1-1-p2} that 
\begin{equation*}
\int_{-m}^m |\tau_h(u)|^p \, dx \le \frac{2^{p}}{\sigma \C} \liminf_{\delta \to 0} \Lambda_{\delta} (u_\delta, \mR). 
\end{equation*}
Since $m \ge 2$ is arbitrary, we obtain, for all $h \in (0, 1)$,  
\begin{equation}\label{lem1-G1-1-p3}
\int_{\mR} |\tau_h(u)|^p \, dx \le \frac{2^{p}}{\sigma \C} \liminf_{\delta \to 0} \Lambda_{\delta} (u_\delta, \mR). 
\end{equation}
It follows that $u
\in W^{1,p}(\mR)$ (see e.g. \cite[Chapter 8]{BrAnalyse1}). \end{proof}

The second key ingredient in the proof of Property (G1) is the following useful property of functions in $W^{1, p}(\mR)$. 

\begin{lemma} \label{lem-lem} Let $p > 1$ and $u \in W^{1, p}(\mR)$ (so that $u$ admits a continuous representative still denoted by $u$). Given $\eps_1 > 0$, there exist a subset $B$ of Lebesgue points of $u'$ and  $\ell \ge 1$ such that 
\begin{equation}\label{G1-p-pro-Am}
\int_{\mR \setminus B} |u'|^p \, dx \le \eps_1 \int_{\mR} |u'|^p \, dx, 
\end{equation}
and, for every  open interval $I'$ with $|I'| \le 1/\ell$ and  $I' \cap B \neq \emptyset$,   and for every $x \in I' \cap B$, 
\begin{equation}\label{lem-lem-1}
\frac{1}{|I'|^p }\fint_{I'} |u(y) - u(x) - u'(x) (y - x)|^p \, d y \le  \eps_1  
\end{equation}
and 
\begin{equation}\label{lem-lem-2}
|u'(x)|^p \ge (1 - \eps_1) \fint_{I'} |u'(y)|^p \, dy. 
\end{equation}
\end{lemma}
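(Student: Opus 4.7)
The plan is to combine Lebesgue-differentiation facts for $u'$ with Egorov's theorem applied to the \emph{finite} measure $d\mu := |u'|^p\, dx$ on $\mR$. Using $\mu$ rather than Lebesgue measure is essential, since it is precisely the quantity $\int_\mR |u'|^p\, dx$ that must absorb the loss in \eqref{G1-p-pro-Am}.

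First I would let $B_0$ be the set of Lebesgue points $x$ of $u'$ (in the $L^p$ sense, i.e.\ $\fint_{I'}|u'(y)-u'(x)|^p\, dy \to 0$ as $I' \ni x$ shrinks) which additionally satisfy $u'(x) \ne 0$. Since $u'$ is a.e.\ equal to its Lebesgue representative, $\mu(\mR \setminus B_0) = 0$. For $\ell \ge 1$ and $x \in B_0$ I would introduce the scale-dependent defect functions
\[
F_\ell(x) := \sup_{\substack{I' \ni x \\ 0 < |I'| \le 1/\ell}} \frac{1}{|I'|^p}\fint_{I'}|u(y)-u(x)-u'(x)(y-x)|^p\, dy,
\]
\[
G_\ell(x) := \sup_{\substack{I' \ni x \\ 0 < |I'| \le 1/\ell}} \left| \frac{\fint_{I'}|u'(y)|^p\, dy}{|u'(x)|^p}-1\right|,
\]
so that controlling $F_\ell$ and $G_\ell$ uniformly on $B$ will deliver \eqref{lem-lem-1} and \eqref{lem-lem-2} respectively. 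Measurability of $F_\ell, G_\ell$ follows from continuity of $(a,b)\mapsto \fint_a^b\!\cdots dy$ in the endpoints, which lets the sup be taken over intervals with rational endpoints.

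The crucial pointwise bound on $F_\ell$ comes from the fundamental theorem of calculus combined with Jensen's (or H\"older's) inequality: for $x, y \in I'$,
\[
|u(y)-u(x)-u'(x)(y-x)|^p = \Big|\int_x^y (u'(t)-u'(x))\, dt\Big|^p \le |y-x|^{p-1}\int_{I'}|u'(t)-u'(x)|^p\, dt,
\]
and averaging in $y \in I'$ (with $|y-x|\le |I'|$) yields $F_\ell(x) \le \sup_{I' \ni x,\, |I'|\le 1/\ell}\fint_{I'}|u'(t)-u'(x)|^p\, dt$. The $L^p$-Lebesgue-point property of $u'$ then gives $F_\ell(x) \to 0$ as $\ell \to \infty$ for every $x \in B_0$; similarly, Lebesgue differentiation for $|u'|^p$ together with $u'(x)\ne 0$ gives $G_\ell(x) \to 0$ on $B_0$.

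Finally I would apply Egorov's theorem on the finite measure space $(\mR, \mu)$ to the two sequences $F_\ell, G_\ell \to 0$ on $B_0$: there exists $B \subset B_0$ with $\mu(B_0 \setminus B) < \eps_1 \mu(\mR) = \eps_1 \int_\mR |u'|^p\, dx$ on which both converge uniformly. Since $\mu(\mR \setminus B_0) = 0$, \eqref{G1-p-pro-Am} holds, and choosing $\ell$ large enough that $F_\ell \le \eps_1$ and $G_\ell \le \eps_1/(1+\eps_1)$ on $B$ gives \eqref{lem-lem-1} and, after rearranging $\fint_{I'}|u'|^p\, dy \le (1+G_\ell(x))|u'(x)|^p$, also \eqref{lem-lem-2}. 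The main obstacle I expect is obtaining both the Taylor-remainder control \eqref{lem-lem-1} and the Lebesgue-density control \eqref{lem-lem-2} simultaneously on a single large set and with a \emph{single} scale threshold $1/\ell$; Egorov is exactly what upgrades the individual pointwise convergences into that uniform choice.
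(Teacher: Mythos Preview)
Your proof is correct and follows the same overall strategy as the paper---define scale-dependent ``defect'' quantities at each Lebesgue point of $u'$, observe they tend to zero pointwise, and use Egorov to upgrade this to uniform convergence on a large set. The execution differs in two respects. First, you apply Egorov directly with respect to the finite measure $d\mu=|u'|^p\,dx$ on all of $\mR$, which yields \eqref{G1-p-pro-Am} in one step; the paper instead first passes to a bounded set $D_m=\{x\in(-m,m):|u'(x)|\ge 1/m\}$ on which it can use ordinary Egorov (with Lebesgue measure), and then invokes absolute continuity of the integral to control $\int_{D_m\setminus B}|u'|^p$. Second, you obtain the Taylor-remainder control $F_\ell\to 0$ by an elementary fundamental-theorem-of-calculus plus H\"older argument, whereas the paper cites the $W^{1,p}$ differentiation theorem from Ziemer. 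Your use of $\mu$ is a genuine simplification, since it absorbs both the truncation to a bounded interval and the lower bound $|u'|\ge 1/m$ into a single application of Egorov; the price is only that one must check measurability of $F_\ell,G_\ell$ a bit more carefully (which you do). The paper's route, on the other hand, keeps the uniform lower bound $|u'(x)|\ge 1/m$ explicit, which is what lets it derive \eqref{lem-lem-2} from control of $\fint_{I'}|u'-u'(x)|^p$ via the triangle inequality; you achieve the same end by normalizing $G_\ell$ by $|u'(x)|^p$ from the start.
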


\begin{proof}
We first recall the following property of $W^{1, p}(\mR)$ functions (see e.g., \cite[Theorem 3.4.2]{Ziemer}): Let $f \in W^{1,p} (\mR)$. Then,  for a.e. $x \in \mR$,
\begin{equation}\label{G1-p-measure}
\lim_{r \to 0} \frac{1}{r^p} \fint_{x-r}^{x+r} \big|f(y) - f(x) - f'(x)(y - x) \big|^p \, dy = 0.
\end{equation}

Given $n \in \mN$, define, for a.e. $x \in \mR$,
\begin{equation}\label{G1-p-def-rho}
\rho_n(x) = \sup \left\{  \frac{1}{r^{p}}
\fint_{x-r}^{x+r} \big|u(y) - u(x) - u'(x) (y-x) \big|^p \, dy ; \; r  \in (0, 1/n) \right\}
\end{equation}
and
\begin{equation}\label{G1-p-def-tau}
\tau_n(x) = \sup  \left\{ \fint_{x-r}^{x+r}| u'(y) -  u'(x)|^p \, dy;  r \in (0, 1/n)  \right\}. 
\end{equation}
Note that, by  \eqref{G1-p-measure}, $\rho_n (x) \to 0$ for a.e. $x \in \mR$ as $n \to + \infty$. We also have, $\tau_n  (x)\to 0$ for a.e. $x \in \mR$ as $n \to + \infty$ (and in fact at every Lebesgue points of $u'$). For $m \ge 1$, set 
$$
D_m = \big\{x \in (-m, m); \mbox{$x$ is a Lebesgue point of $u'$ and } |u'(x)| \ge 1/ m \big\}. 
$$
Then there exists $m \ge 1$ such that  
\begin{equation}\label{G1-p-pro-Am-1}
\int_{\mR \setminus D_m} |u'|^p \, dx \le \frac{\eps_1}{2} \int_{\mR} |u'|^p \, dx.  
\end{equation}
Fix such an $m$.  By Egorov's theorem, there exist a  subset $B$ of $ D_m$ such that  $(\rho_n)$ and $(\tau_n)$ converge to $0$ uniformly on $B$, and 
\begin{equation}\label{G1-p-pro-Am-2}
\int_{D_m \setminus B} |u'|^p \, dx \le \frac{\eps_1}{2} \int_{\mR} |u'|^p \, dx.  
\end{equation}
Combining \eqref{G1-p-pro-Am-1} and \eqref{G1-p-pro-Am-2} yields \eqref{G1-p-pro-Am}. 

We have, for every non-empty, open interval $I'$ and $x \in \mR$ (in particular for $x \in I' \cap B$),  
\begin{equation}\label{lem-lem-*}
\left(\fint_{I'} |u'(y)|^p \, dy \right)^{1/p} \le \left(\fint_{I'} |u'(y) - u'(x) |^p \, dy \right)^{1/p} + |u'(x)|, 
\end{equation}
Since $(\rho_n)$ and $(\tau_n)$ converge to $0$ uniformly on $B$ and $|u'(x)| \ge 1/ m$  for $x \in B$, it follows from \eqref{lem-lem-*} that there exists an $\ell \ge 1$ such that \eqref{lem-lem-1} and \eqref{lem-lem-2} holds. The proof is complete. 
\end{proof}


We are ready to give the 

\begin{proof}[Proof of Property (G1)]  We begin with 

{\it Claim 2:}  For $\eps > 0$, there exist two positive constants $\hat \delta_1, \hat \delta_2$ such that for every $c, d \in \mR$, for every open bounded interval  $I'$ of $\mR$, and for every
$f \in L^p(I')$ satisfying 
\begin{equation*}
\fint_{I'} |f(y) - (c y + d)|^p \, dy < \hat \delta_1 |c|^p |I'|^p,
\end{equation*}
one has
\begin{equation*}
\Lambda_{\delta} (f, I') \ge (\C - \eps)|c|^p |I'| \mbox{ for all } \delta \in (0, \hat \delta_2 |c|  |I'|). 
\end{equation*}
This claim is a consequence of the definition of $\C$ and its proof 
is omitted (it is similar to the one of Claim 1 in the proof of \Cref{lem-C4}). 

In order to  establish Property (G1), it suffices to prove that 
\begin{equation}
\liminf_{k \to + \infty} \Lambda_{\delta_k}(g_k, \mR) \ge \C \int_{\mR} |g'|^p \,dx 
\end{equation}
for every $(\delta_k) \subset \mR_+$ and $(g_k) \subset L^p(\mR)$ such that $\delta_k \to 0$ and  $g_k \to g$ in $L^p(\mR)$.

Without loss of generality, one may assume that $\liminf_{k \to + \infty} \Lambda_{\delta_k}(g_k, \mR) < + \infty$. It follows from  Lemma~\ref{lem1-G1-1} that $g \in W^{1, p}(\mR)$.  Fix $\eps > 0$ (arbitrary) and  let $\hat \delta_1$  be
the positive constant corresponding to $\eps$ in  Claim 2. Set 
$$
A_m = \{x \in \mR; \; \mbox{$x$ is a Lebesgue points of $g'$ and } |g'(x)| \le 1/m \} \mbox{ for } m \ge 1.
$$
Since 
\begin{equation*}
\lim_{m \to + \infty}  \int_{A_m} |g'|^p \, dx = 0, 
\end{equation*}
there exists $m \ge 1$ such that
\begin{equation}\label{G1-p-pro-Am-11}
\int_{A_m} |g'|^p \, dx \le \frac{\eps}{2} \int_{\mR} |g'|^p \, dx.  
\end{equation}
Fix such an $m$. 
 By Lemma~\ref{lem-lem} applied to $u = g$ and $\eps_1 = \min\{ \eps/ 2, \hat \delta_1/ (2m)^p \}$, there exist a subset $B$ of Lebesgue points of $g'$ and a positive integer $\ell$  such that 
\begin{equation}\label{estBm-11}
\int_{\mR \setminus B} | g'|^p \, dx \le \frac{\eps}{2} \int_{\mR}
|g'|^p \, dx, 
\end{equation}
and for every open interval  $I'$ with $|I'| \le 1 / \ell$ and  $I' \cap B \neq \emptyset$,  and,  for every $x \in I' \cap B$,  
\begin{equation}\label{G1-p-rho-p1}
\frac{1}{|I'|^p}
\fint_{I'} \big|g(y) - g(x) - g'(x)(y-x)  \big|^p \, dy \le  \hat \delta_1/ (2m)^p
\end{equation}
and
\begin{equation}\label{G1-p-tau-p2}
|g'(x)|^p |I'| \ge (1 - \eps) \int_{I'} |g' |^p \, dy. 
\end{equation}
Fix such an $\ell$.  Set 
$$
B_m = (B \setminus A_m)
$$
and denote 
\begin{equation*}
{\bf \Omega}_\ell = \Big\{(i/\ell, (i+1) /\ell); i \in \mZ \Big\} \quad \mbox{ and } \quad { \bf
J}_\ell = \Big\{  J \in {\bf \Omega}_\ell ; \; J
\cap B_m \neq \emptyset \Big\}.
\end{equation*}
Since $\mR \setminus (B \setminus A_m) \subset (\mR \setminus B) \cup A_m$, it follows 
from \eqref{G1-p-pro-Am-11} and \eqref{estBm-11} that 
\begin{equation}\label{estBm}
\int_{\mR \setminus B_m} | g'|^p \, dx = \int_{\mR \setminus (B \setminus A_m)} | g'|^p \, dx \le \eps \int_{\mR}
|g'|^p \, dx.  
\end{equation}

Take $J \in {\bf J}_\ell$ and $x \in J \cap B_m$. Since $g_k \to g$ in $L^p(J)$, we derive from 
\eqref{G1-p-rho-p1}  (applied with $I' = J$ which is admissible since $B_m \subset B$) that 
\begin{equation*}
\lim_{k \to + \infty} \frac{1}{|J|^p} \fint_{J} \big|g_k(y) - g(x) - g'(x)(y-x)  \big|^p \, dy \le  \hat \delta_1/ (2m)^p. 
\end{equation*}
Applying Claim 2 with $I' = J$, $f = g_k$ for large $k$, $c = g'(x)$, and $d = g(x)$, we have 
\begin{equation*}
\liminf_{k \to + \infty} \Lambda_{\delta_k}(g_k, J)  \ge
(\C- \eps) |g'(x)|^p |J|,
\end{equation*}
which implies, by \eqref{G1-p-tau-p2},
\begin{equation}\label{estI'}
\liminf_{k \to + \infty}\Lambda_{\delta_k}(g_k, J) \ge
(\C- \eps) (1- \eps)\int_{J} |g'|^p \, dy.
\end{equation}
Since
\begin{align*}
\liminf_{k \to + \infty} \Lambda_{\delta_k}(g_k, \mR) \ge  \sum_{J \in
{\bf J}_\ell }\liminf_{k \to + \infty} \Lambda_{\delta_k}(g_k, J),
\end{align*}
it follows from \eqref{estI'} that
\begin{multline*}
\liminf_{k \to + \infty} \Lambda_{\delta_k}(g_k, \mR)
\ge (\C- \eps) (1- \eps) \sum_{J \in {\bf J}_\ell} \int_{J} |g'|^p \, dx \\[6pt]
\ge (\C- \eps) (1- \eps)\int_{B_m} |g'|^p \, dx \mathop{\ge}^{\eqref{estBm}} 
 (\C- \eps) (1- \eps)^2 \int_{\mR} |g'|^p \, dx; 
\end{multline*}
here in the second inequality, we have used the fact $B_m$ is contained in $\bigcup_{J \in {\bf J}_\ell} J$ up to a null set. 
Since $\eps>0$ is arbitrary, one has
\begin{align*}
\liminf_{k \to + \infty}\Lambda_{\delta_k}(g_k, \mR) \ge
\C \int_{\mR} |g'|^p \, dx.
\end{align*}
The proof is complete. 
\end{proof}

\medskip 
\noindent {\bf Acknowledgments.} This work was completed during a visit of H.-M. Nguyen at Rutgers University. He thanks H. Brezis for the invitation and  the Department of Mathematics for its hospitality.

%
%
%
%
%

\providecommand{\bysame}{\leavevmode\hbox to3em{\hrulefill}\thinspace}
\providecommand{\MR}{\relax\ifhmode\unskip\space\fi MR }
\providecommand{\MRhref}[2]{%
  \href{http://www.ams.org/mathscinet-getitem?mr=#1}{#2}
}
\providecommand{\href}[2]{#2}

\end{document}